\documentclass[a4paper,11pt]{article}
\usepackage[latin1]{inputenc}
\usepackage[T1]{fontenc}
\usepackage[british,UKenglish,USenglish,american]{babel}
\usepackage{amsmath}
\usepackage{amsfonts}
\usepackage{hyperref}
\usepackage[T1]{fontenc}
\usepackage[latin1]{inputenc}
\usepackage{theorem}  
\usepackage{graphics}
\usepackage{makeidx}
\usepackage{fancyhdr}
\usepackage{bbm} 
\usepackage{bm}
\usepackage{amssymb}
\usepackage{color}
\numberwithin{equation}{section}

\setlength{\oddsidemargin}{0.40cm} \setlength{\evensidemargin}{0.40cm}
\textwidth 16cm
\newcommand{\be}{\begin{equation}}
\newcommand{\ee}{\end{equation}}
\newcommand{\benn}{\begin{equation*}}
\newcommand{\eenn}{\end{equation*}}
\newcommand{\bea}{\begin{eqnarray}}
\newcommand{\eea}{\end{eqnarray}}
\newcommand{\beann}{\begin{eqnarray*}}
\newcommand{\eeann}{\end{eqnarray*}}

\newtheorem{theorem}{Theorem}[section]

\newtheorem{corollary}[theorem]{Corollary}
\newtheorem{lemma}[theorem]{Lemma}

\newtheorem{definition}[theorem]{Definition}
\theorembodyfont{\rm}

\newtheorem{remark}[theorem]{Remark}
\newtheorem{example}[theorem]{Example}

\newcommand{\qed}{\hfill $\Box$\smallskip}

\parindent0.5pt

\def\R{\mathbb{R}}
\def\C{\mathbb{C}}
\def\N{\mathbb{N}}


\def\cD{\mathcal{D}}


\def\txtd{{\textnormal{d}}}

\def\txtD{{\textnormal{D}}}



\sloppy

\title{Random  attractors  via pathwise mild solutions\\
for stochastic parabolic evolution equations}

\author{Christian Kuehn\thanks{Technical University of Munich (TUM), 
		Faculty of Mathematics, 85748 Garching bei M\"unchen, Germany. E-Mail: ckuehn@ma.tum.de},~~~Alexandra 
	Neam\c tu\thanks{Bielefeld University, Faculty of Mathematics, 33615 Bielefeld, Germany. E-Mail: alexandra.neamtu@uni-bielefeld.de},~~~Stefanie Sonner\thanks{Radboud University Nijmegen, IMAPP--Mathematics, 6500 GL Nijmegen, The Netherlands. E-Mail: s.sonner@ru.nl}
	}

\begin{document}

\maketitle
\begin{abstract}
We investigate the longtime behavior of stochastic partial differential 
equations (SPDEs) with differential operators that depend on time and the underlying probability space. 
In particular, we consider stochastic parabolic evolution problems in Banach spaces with additive noise and prove
 the existence of  random exponential 
attractors. These are compact  random sets of finite fractal dimension that 
contain the global random attractor and are attracting at an 
exponential rate.
In order to apply the framework of random dynamical systems, we use the concept of pathwise mild solutions. 
\end{abstract}
\textbf{Keywords}: stochastic parabolic evolution equations, pathwise mild solution, random attractors, fractal dimension.\\

\textbf{MSC}: 60H15, 37H05, 37L55.
\section{Introduction}\label{intro}

Our aim is to study the longtime dynamics of stochastic evolution equations  using an approach that is different from the 
classical one. Namely, instead of transforming the SPDE into a random PDE,  
we work with solutions that are defined pathwise, see~\cite{BessaihGarridoSchmalfuss, PronkVeraar}. We consider parabolic problems with random differential operators and use a pathwise representation formula to show that the 
solution operator generates a random dynamical system and to prove that it possesses random attractors of finite fractal dimension.
\medskip

In particular, let $X$ be a separable Banach space and let $(\Omega, \mathcal{F}, \mathbb{P})$ be a complete probability space with filtration $(\mathcal{F}_t)_{t\in\R}$. We consider stochastic parabolic evolution equations of the form 
\begin{align}\label{problem}
\txtd u(t)&=(A(t,\omega)u(t)+F(u(t)))\txtd t+\sigma  \txtd W(t),  
\end{align}
where $(A(t,\omega))_{t\in\R,\omega\in\Omega}$ is a measurable, adapted family of sectorial 
operators in $X$ depending on time and the underlying probability space. Moreover, $F$ is the nonlinearity, $\sigma>0$ indicates the noise intensity,  and $(W(t))_{t\geq 0}$ denotes an $X$-valued Brownian motion. 
\medskip

The common approach to show the existence of random attractors is to introduce a suitable change of variables that 
transforms the SPDE into a family of PDEs with random coefficients.
The resulting random PDEs can be studied by deterministic techniques. 
This method has been applied to a large variety of PDEs, mainly for equations perturbed by additive noise or by a particular linear multiplicative noise, e.g. see
\cite{schmalfuss,Gess1,GessLiuRoeckner,Bates,Wang} and the references therein. However, for more general situations such a change of variables is 
not always known or cannot be performed.
In~\cite{Gess2}, using the theory of strongly monotone operators,  a strictly stationary solution of the equation
 \begin{align*}
 \txtd u(t) = A(t,\omega, u(t))~\txtd t +\sigma\txtd W(t)
 \end{align*}
 was constructed. This allows to transform SPDEs of the form \eqref{problem} into a family of random PDEs. Using this ansatz, the existence of random attractors was shown in \cite{Gess2} for a class of SPDEs including equations such as \eqref{problem}.\\ 
 Here, we follow a different approach. We aim to use the notion of pathwise mild solutions introduced by Pronk and Veraar in \cite{PronkVeraar} 
 to establish the existence of global and exponential random attractors for~\eqref{problem}.
 So far, only few results concerning the existence of exponential attractors for SPDEs have been obtained.
 In particular, let  $(U(t,s,\omega))_{t\geq s,\omega\in\Omega}$ be the stochastic evolution system generated by the family $(A(t,\omega))_{t\in\R,\omega\in\Omega}$, then the pathwise mild solution of \eqref{problem} is defined as 
\begin{align*}
u(t)=&\ U (t,0) u_{0} + \sigma U(t,0) W(t) + \int\limits_{0}^{t} U(t,s)F(u(s))~\txtd s\\
&\ -\sigma \int\limits_{0}^{t}U(t,s)A(s) (W(t) -W(s)) ~\txtd s,
\end{align*}
where, for simplicity, we omit to write the dependency of $A$ and $U$ on $\omega$.
This formula is motivated by formally applying integration by 
parts for the stochastic integral and it, indeed, yields a pathwise representation for the solution, see
\cite{PronkVeraar}. Instead, if one directly used the classical mild formulation of SPDEs to 
define a solution, the resulting stochastic integral would not be well-defined (see Subsection \ref{subs_spdes}).
\medskip

Our aim is to show that problem
\eqref{problem} generates a random dynamical system using the concept of pathwise mild solutions and to
prove the existence of random attractors. 
We will not only consider (global) random attractors, but also show that random exponential attractors exist. In particular, the existence of  random exponential attractors immediately implies the existence and finite fractal dimension of the (global) random attractor. To this end, we employ a general existence result for random exponential attractors 
in \cite{CaSo} which turns out to be easily applicable in our setting.
\medskip

Stochasticity plays an important role in many real world applications. Complex systems in physics, engineering or biology 
can be described by PDEs with coefficients that depend on stochastic processes. These random terms quantify the lack of knowledge of certain parameters in the equation or reflect external fluctuations. 
Problem \eqref{problem} is a semilinear parabolic problem where the coefficients of the differential operators $(A(t,\omega))_{t\in\mathbb{R},\omega\in\Omega}$ depend on a stochastic process with suitable properties, and the equation is perturbed by additive noise.  
A related, but simpler setting are random parabolic equations of the form
\begin{align}\label{eq1}
\txtd u(t) =( A(t,\omega) u(t)+F(t,\omega,u(t)))\txtd t.
\end{align}
The longtime behavior of such random evolution equations 
has been investigated using the random dynamical system approach in~\cite{CDLS,lns, MShen, SalakoShen}. To this end, the following structure of the random generators was assumed, 
$$
A(t,\omega):=A(\theta_t\omega)\qquad \forall t\in\R, \omega\in\Omega, 
$$
where $(\Omega,\mathcal{F}, \mathbb{P}, (\theta_t)_{t\in\mathbb{R}} )$ is an ergodic metric dynamical system. In this context, results concerning invariant manifolds~\cite{CDLS,lns}, principal Lyapunov exponents~\cite{MShen} and the stability of equilibria~\cite{SalakoShen} have been obtained. Random evolution equations of the form~\eqref{eq1} arise in several applications. For instance, setting $A(\theta_t\omega)u:=\Delta u + a(\theta_t\omega) u$ and $F(t,\omega,u):=-a(\theta_t\omega)u^2$, with a suitable measurable function $a:\Omega\to(0,\infty)$, 
we recover a random version of the Fisher-KPP equation,
$$
\txtd u(t) = [(\Delta  + a(\theta_t\omega)) u(t) - a(\theta_t\omega)u^2(t)  ]\txtd t,
$$ 
which was analyzed in~\cite{SalakoShen}.\\
In the present work, we consider equations of the form~\eqref{problem}, i.e.~we perturb a semilinear nonautonomous random parabolic equation by an infinite-dimensional Brownian motion and investigate the existence of random  attractors. 
\medskip

The outline of our paper is as follows. In Section~\ref{sec:p}, we collect basic notions and results from the theory of random dynamical systems and nonautonomous stochastic evolution equations and recall an existence result for random exponential attractors. In Section~\ref{ass:attr}, we formulate and prove our main results. First, we show that under suitable assumptions on $A$, $F$ and $W$, the solution operator corresponding to~\eqref{problem} generates a random dynamical system. Then, we establish the existence of an absorbing set and verify the so-called smoothing property of the random dynamical system.
These properties allow us to conclude the existence of random exponential attractors in Theorem~\ref{thm_expattr} and to derive upper bounds for their fractal dimension. As a consequence, the (global) random attractor 
exists and its fractal dimension is finite. 
In  Section~\ref{examples}, we provide explicit examples of nonautonomous random differential operators satisfying our hypotheses and point out potential applications of our results.
\medskip

Our paper provides a first, simple example that illustrates how the concept of pathwise mild solutions can be used to show the existence of global and exponential random attractors for SPDEs with random differential operators. 
Numerous extensions are imaginable. In particular, in future works we plan to relax the assumptions on the non-linear term $F$ and to consider Problem \eqref{problem} with multiplicative noise. Another interesting aspect would be to investigate the regularity of random attractors.

\section{Preliminaries}\label{sec:p}

We first collect some basic notions and results from the theory of random 
dynamical systems, which are mainly taken from~\cite{arnold,schmalfuss,crauel}.
Then, in Subsection \ref{subsec_expattr}, we state a general existence theorem for random exponential attractors which was proven in \cite{CaSo}. In Subsection \ref{subs_spdes}, we 
recall the notion of pathwise mild solutions 
for stochastic evolution equations introduced in \cite{PronkVeraar}.

\subsection{Random dynamical systems and random attractors}\label{subsec_rds}

In order to quantify uncertainty we describe an appropriate model of the noise.
If not further specified, $(\Omega,\mathcal{F},\mathbb{P})$ denotes a probability space. Moreover, $X$ is a separable and reflexive Banach space and $\|\cdot\|_X$ denotes the norm in $X$.

\begin{definition} 
Let $\theta:\mathbb{R}\times\Omega\rightarrow\Omega$  be a family of $\mathbb{P}$-preserving transformations (meaning that $\theta_{t}\mathbb{P}=\mathbb{P}$ for all $t\in\mathbb{R}$) with the following properties:
	\begin{description}
		\item[(i)] the mapping $(t,\omega)\mapsto\theta_{t}\omega$ is $(\mathcal{B}(\mathbb{R})\otimes\mathcal{F},\mathcal{F})$-measurable for all $t\in\R, \omega\in\Omega$;
		\item[(ii)] $\theta_{0}=\text{Id}_{\Omega}$;
		\item[(iii)] $\theta_{t+s}=\theta_{t}\circ\theta_{s}$ for all $t,s,\in\mathbb{R}$,
	\end{description}
	where $\mathcal{B}(\mathbb{R})$ denotes the Borel $\sigma$-algebra.
	Then, the quadruple $(\Omega,\mathcal{F},\mathbb{P},(\theta_{t})_{t\in\mathbb{R}})$ is called a metric dynamical system.
\end{definition}

\begin{remark}
	\begin{itemize}		
		\item[(a)] Here and in the sequel, we write $\theta_{t}\omega$ for $\theta(t,\omega),\ t\in\R,\omega\in\Omega$.
		\item[(b)] We always assume that $\mathbb{P}$ is ergodic with respect to
		$(\theta_{t})_{t\in\mathbb{R}}$, i.e. any $(\theta_{t})_{t\in\mathbb{R}}$-invariant 
		subset has either zero or full measure.
	\end{itemize}
\end{remark}

Our aim is to introduce a metric dynamical system associated to a two-sided $X$-valued Wiener process.\\
 
 
 For the sake of completeness, we recall the construction of such a process if $X$ is not a Hilbert space. First, we introduce an auxiliary separable Hilbert space $H$ and denote by $(W_H(t))_{t\geq 0}$ an $H$-cylindrical Brownian motion, i.e.~$(W_{H}(t)h)_{t\geq 0}$ is a real-valued Brownian motion for every $h\in H$ and $\mathbb{E}[W_{H}(t)h\cdot W_H(s)g]=\min\{s,t\}[h,g]_{H}$ for $s,t\geq 0$ and $h,g\in H$, where $[\cdot,\cdot]$ denotes the inner product in $H$.
Furthermore, an operator $G:H\to X$ is called $\gamma$-radonifying if 
\begin{equation*}
\mathbb{E}\left\|\sum\limits_{n=1}^{\infty}\gamma_{n}G e_{n} \right\|_X^{2}<\infty,
\end{equation*}
where $(\gamma_{n})_{n\in\N}$ is a sequence of independent standard Gaussian 
random variables  and $(e_{n})_{n\in\N}$ is an 
orthonormal basis in $H$.
If $X$ is isomorphic to $H$, then the previous condition means that $G$ is a Hilbert-Schmidt operator (notation: $G\in\mathcal{L}_{2}(H)$).
In this framework, letting $(\tilde e_n)_{n\in\N}$ be an orthonormal basis 
of  $(\mbox{ker}G)^{\perp}$, we know according to~\cite[Prop.~8.8]{isem} that the series
$$ \sum\limits_{n=1}^{\infty}W_{H}(t)\tilde e_nGe_n$$ converges
almost surely and defines an $X$-valued Brownian motion. Its covariance operator is given by $tGG^{*}$, where $G^*$ denotes the adjoint. Moreover, every $X$-valued Brownian motion can be obtained in this way.
Again, if $X$ is isomorphic to $H$ and $G\in\mathcal{L}_{2}(H)$ with $\|G\|_{\mathcal{L}_{2}(H)}=\mbox{Tr}(G G^{*})$, then the previous definition entails a trace-class Wiener process.  Finally, we extend this to a two-sided process in the standard way.
\medskip

To obtain a metric dynamical system associated to such a process, we let $C_{0}(\mathbb{R};X)$ denote the set of continuous $X$-valued functions which are zero at zero  
equipped with the compact open topology. We take $\mathbb{P}$ as the Wiener measure on 
$\mathcal{B}(C_{0}(\mathbb{R};X))$ having a covariance operator $Q$ on $X$. Then, 
Kolmogorov's theorem about the existence of a continuous version yields the canonical probability space 
$(C_{0}(\mathbb{R};X),\mathcal{B}(C_{0}(\mathbb{R};X)),\mathbb{P})$. Moreover, to obtain an ergodic metric 
dynamical system we introduce the Wiener shift, which is defined as
	\begin{align}\label{shift}
	\theta_{t}\omega(\cdot{}):=\omega(t+\cdot{})-\omega(t)\quad \mbox{for all } t\in\R, \omega\in C_{0}(\mathbb{R};X).
	\end{align}
Throughout this manuscript, $\theta_t\omega(\cdot)$ will always denote the Wiener shift.\\		

We now recall the definition of a random dynamical system.
\begin{definition} 
A continuous random dynamical system on $X$ over a metric dynamical system $(\Omega,\mathcal{F},\mathbb{P},(\theta_{t})_{t\in\mathbb{R}})$ is a mapping $$\varphi:\mathbb{R^{+}}\times\Omega\times X\to X,\mbox{  } (t,\omega,x)\mapsto \varphi(t,\omega,x), $$
	which is $(\mathcal{B}(\mathbb{R}^{+})\otimes\mathcal{F}\otimes\mathcal{B}(X),\mathcal{B}(X))$-measurable and satisfies:
	\begin{description}
		\item[(i)] $\varphi(0,\omega,\cdot{})=\emph{Id}_{X}$ for all $\omega\in\Omega$;
		\item[(ii)]$ \varphi(t+\tau,\omega,x)=\varphi(t,\theta_{\tau}\omega,\varphi(\tau,\omega,x)), \mbox{ for all } x\in X, \mbox{ } t,\tau\in\mathbb{R}^{+}\mbox{ and all }\omega\in\Omega;$
		\item[(iii)] $\varphi(t,\omega,\cdot{}):X\to X$ is continuous for all $t\in\mathbb{R}^{+}$ and $\omega\in\Omega$.
	\end{description}
\end{definition}

The second property is referred to as the \emph{cocycle property} and generalizes the semigroup property. 
In fact, if $\varphi$ is independent of $\omega$, (\textbf{ii}) reduces exactly to the semigroup property, i.e.  $\varphi(t+\tau,x)=\varphi(t,\varphi(\tau,x))$. 
For random dynamical systems, the evolution of the noise $(\theta_{t}\omega)$ has additionally to be taken 
into account. 

Under suitable assumptions, the solution operator of a random differential equation generates a 
random dynamical system.
Stochastic (partial) differential equations are more involved since stochastic integrals are defined 
\emph{almost surely}, though the cocycle property must hold \emph{for all} $\omega$.
\medskip

Referring to the monograph by Arnold~\cite{arnold}, 
it is well-known that stochastic (ordinary)  differential equations generate random dynamical systems under suitable assumptions on the coefficients. This is due to the flow property, see~\cite{Kunita} which can be deduced from Kolmogorov's theorem about the existence of a (H\"older-) continuous random field with a finite-dimensional parameter range. Here, the parameters of this random field are the time and the non-random initial data.\\
Whether an SPDE generates a random dynamical system has been a long-standing open problem, since Kolmogorov's theorem breaks down for random fields parametrized by infinite-dimensional Hilbert spaces,~see~\cite{Mohammed}.~As a consequence, the question of how a random dynamical system can be obtained from an SPDE is not trivial, since solutions are only defined almost surely which is insufficient for the cocycle property. In particular,  there exist exceptional sets which depend on the initial condition, and if more than countably many exceptional sets occur it is unclear how  the random dynamical system can be defined.
This problem was fully solved only under restrictive assumptions on the structure of the noise.~More precisely, for SPDEs with additive or linear multiplicative noise, there are standard transformations which reduce these SPDEs in PDEs with random coefficients. Since random PDEs can be solved pathwise, the generation of the random dynamical system is straightforward.\\

\medskip

Before we recall the notions of global and exponential random attractors we need to introduce  the class of tempered random sets. 
From now on, in this subsection and Subsection 2.2, when stating properties involving 
a random parameter, we assume, unless otherwise specified, that they hold on a $(\theta_t)_{t\in\mathbb{R}}$-invariant subset of $\Omega$ of full measure, i.e. there exists a $(\theta_t)_{t\in\mathbb{R}}$-invariant subset $\Omega_0\subset\Omega$ of full measure such that the property holds for all $\omega\in\Omega_0.$ To simplify notations we denote $\Omega_0$ again by $\Omega.$

\begin{definition}
 A multifunction $\mathcal{B}=\{B(\omega)\}_{\omega\in\Omega}$ of nonempty closed subsets 
 $B(\omega)$ of $X$ is called a random set if 
 $$
 \omega\mapsto\inf_{y\in B(\omega)}\|x-y\|_X
 $$ 
 is a random variable for each $x\in X$.

The random set $\mathcal{B}$ is bounded (or compact) if the sets $B(\omega)\subset X$ are bounded (or compact) 
for all $\omega\in\Omega.$
\end{definition}

\begin{definition} 
A random bounded set $\{B(\omega)\}_{\omega\in\Omega}$ of $X$ is called tempered with respect to $(\theta_{t})_{t\in\mathbb{R}}$ if for all $\omega\in\Omega$ it holds that
	$$ \lim\limits_{t\to\infty}\text{e}^{-\beta t}\sup\limits_{x\in B(\theta_{-t}\omega)}\|x\|_X=0\quad \mbox{  for all } \beta>0.$$
\end{definition}

Here and in the sequel, we denote by $\mathcal{D}$ the collection of tempered random sets in $X$.


\begin{definition}\label{attractor}
Let $\varphi$ be a random dynamical system on $X$. A random set $\{\mathcal{A}(\omega)\}_{\omega\in\Omega}\in\mathcal{D}$ is called a 
	$\mathcal{D}$-random  (pullback) attractor for $\varphi$ if the following properties are satisfied:
	\begin{description}
		\item [a)] $\mathcal{A}(\omega)$ is compact for every $\omega\in\Omega$;
		\item [b)] $\{\mathcal{A}(\omega)\}_{\omega\in\Omega}$ is $\varphi$-invariant, i.e.
		$$\varphi(t,\omega,\mathcal{A}(\omega))=\mathcal{A}(\theta_{t}\omega)\mbox{ for all } t\geq 0, \omega\in\Omega;$$
		\item [c)] $\{\mathcal{A}(\omega)\}_{\omega\in\Omega}$ pullback attracts every set in $\mathcal{D}$, i.e. for every $D=\{D(\omega)\}_{\omega\in\Omega}\in\mathcal{D}$,
		$$\lim\limits_{t\to\infty}d(\varphi(t,\theta_{-t}\omega,D(\theta_{-t}\omega)),\mathcal{A}(\omega))=0, $$
		where $d$ denotes the Hausdorff semimetric in $X$,
		$d(A,B)=\sup\limits_{a\in A}\inf\limits_{b\in B}\|a-b\|_X$, for any subsets $A\subseteq X$ and $B\subseteq X$.
	\end{description}
\end{definition}

The following theorem provides a criterion for the existence of random attractors, see Theorem 4 in \cite{CrKl}. The uniqueness follows from Corollary 1 in \cite{CrKl}.

\begin{theorem}\label{thm_exist_attractor}
There exists a $\mathcal{D}$-random (pullback) attractor for $\varphi$ if and only if there exists a
compact random set that pullback attracts all random sets $D\in\mathcal{D}$.  
Moreover, the random (pullback) attractor is unique. 
\end{theorem}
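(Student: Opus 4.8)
I would prove this via the classical $\omega$-limit set construction (Crauel--Flandoli), splitting into the two implications. The ``only if'' direction is immediate: if $\{\cA(\omega)\}_{\omega\in\Omega}$ is a $\cD$-random attractor, then by parts a) and c) of Definition~\ref{attractor} it is itself a compact random set pullback attracting every $D\in\cD$. For the ``if'' direction, let $\cK=\{K(\omega)\}_{\omega\in\Omega}$ be a compact random set pullback attracting all of $\cD$, which (as in~\cite{CrKl}) we may assume to be tempered, so that $\cK\in\cD$. The candidate attractor is the pullback $\omega$-limit set of $\cK$,
\begin{align*}
\cA(\omega):=\bigcap_{s\ge 0}\ \overline{\bigcup_{t\ge s}\varphi\big(t,\theta_{-t}\omega,K(\theta_{-t}\omega)\big)},
\end{align*}
and the task is to verify for $\{\cA(\omega)\}_{\omega\in\Omega}$ the random-set and temperedness properties together with a)--c) of Definition~\ref{attractor}, followed by uniqueness.

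\smallskip

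\emph{Nonemptiness, compactness, and $\cA(\omega)\subseteq K(\omega)$.} The main tool is the cocycle property in the form $\varphi(t,\theta_{-t}\omega,\cdot)=\varphi\big(1,\theta_{-1}\omega,\varphi(t-1,\theta_{-t}\omega,\cdot)\big)$. Given $t_n\to\infty$ and $x_n\in K(\theta_{-t_n}\omega)$, the attraction property of $\cK$ applied at the base point $\theta_{-1}\omega$ shows that $\varphi(t_n-1,\theta_{-t_n}\omega,x_n)$ lies, for all large $n$, in a fixed neighbourhood of the compact set $K(\theta_{-1}\omega)$; a convergent subsequence together with continuity of $\varphi(1,\theta_{-1}\omega,\cdot)$ then yields a convergent subsequence of $\varphi(t_n,\theta_{-t_n}\omega,x_n)$, giving nonemptiness, and a diagonal argument gives compactness of $\cA(\omega)$. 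Since $\cK$ pullback attracts itself, every such limit point belongs to $\overline{K(\omega)}=K(\omega)$, so $\cA(\omega)\subseteq K(\omega)$; hence $\cA$ is bounded and temperedness of $\cA$ follows from that of $\cK$.

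\smallskip

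\emph{Invariance and attraction.} For $\varphi(t,\omega,\cA(\omega))=\cA(\theta_t\omega)$, the inclusion ``$\subseteq$'' is a direct consequence of the cocycle property and continuity of $\varphi(t,\omega,\cdot)$; for ``$\supseteq$'', writing $y\in\cA(\theta_t\omega)$ as a limit of points $\varphi(s_n+t,\theta_{-s_n}\omega,x_n)$ and factoring off $\varphi(t,\omega,\cdot)$, the sequence $\varphi(s_n,\theta_{-s_n}\omega,x_n)$ stays near the compact set $K(\omega)$, so a subsequence converges to some $z\in\cA(\omega)$ with $\varphi(t,\omega,z)=y$. That $\cA$ pullback attracts $\cK$ is inherent in the $\omega$-limit set construction: otherwise a sequence staying at distance $\ge\varepsilon$ from $\cA(\omega)$ would, by the previous step, have a subsequence converging into $\cA(\omega)$. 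To pass from attraction of $\cK$ to attraction of an arbitrary $D\in\cD$, fix $\varepsilon>0$, pick $\tau$ with $d\big(\varphi(\tau,\theta_{-\tau}\omega,K(\theta_{-\tau}\omega)),\cA(\omega)\big)<\varepsilon$, use that $\cK$ pullback attracts $D$ to force $\varphi(t,\theta_{-t}\omega,D(\theta_{-t}\omega))$ into a small neighbourhood of $K(\theta_{-\tau}\omega)$ for all large $t$, and push this neighbourhood forward by $\varphi(\tau,\theta_{-\tau}\omega,\cdot)$, using that a continuous map sends a sufficiently small neighbourhood of a compact set into any prescribed neighbourhood of its image.

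\smallskip

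\emph{Measurability and uniqueness.} The random-set property of $\cA$ follows by writing, for each $x\in X$,
\begin{align*}
\inf_{y\in\cA(\omega)}\|x-y\|_X=\lim_{s\to\infty}\ \inf_{t\ge s}\ \inf_{y\in K(\theta_{-t}\omega)}\big\|x-\varphi(t,\theta_{-t}\omega,y)\big\|_X,
\end{align*}
where on the right the infimum over $t\ge s$ may be taken over rationals by continuity of $\varphi$ in $t$, and each resulting term is measurable in $\omega$ by joint measurability of $\varphi$ and the random-set property of $\cK$. For uniqueness, let $\cA_1,\cA_2$ both be $\cD$-random attractors. Since $\cA_2\in\cD$ is pullback attracted by $\cA_1$, invariance of $\cA_2$ gives $\cA_2(\omega)=\varphi(t,\theta_{-t}\omega,\cA_2(\theta_{-t}\omega))$ for every $t\ge0$, hence $d(\cA_2(\omega),\cA_1(\omega))=\lim_{t\to\infty}d\big(\varphi(t,\theta_{-t}\omega,\cA_2(\theta_{-t}\omega)),\cA_1(\omega)\big)=0$, so $\cA_2(\omega)\subseteq\cA_1(\omega)$ by compactness, and equality follows by symmetry. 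I expect the main obstacle to be the measurability step --- verifying rigorously that $\omega\mapsto\cA(\omega)$ is a random set --- together with careful bookkeeping of the base-point shifts $\theta_{-t}\omega$ in the cocycle identities; the remaining arguments are routine compactness and continuity estimates. This is precisely Theorem~4 (with uniqueness from Corollary~1) of~\cite{CrKl}.
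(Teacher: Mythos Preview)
Your proposal is correct and aligns with the paper: the paper does not give its own proof of this theorem but simply cites Theorem~4 and Corollary~1 of~\cite{CrKl}, exactly as you note at the end of your sketch. Your $\omega$-limit set construction is the standard argument behind those cited results, so there is nothing to compare.
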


One way of proving the existence of the random  attractor, that in addition implies its 
finite fractal dimension, is to show that a random exponential attractor exists. 
Exponential attractors are compact subsets of finite fractal dimension that contain the global attractor and are attracting at an exponential rate. This notion was first introduced for semigroups in the autonomous deterministic setting \cite{EdFoNiTe} and has later been extended for nonautonomous and random dynamical systems, see \cite{CarSo, CaSo} and the references therein.   

Here, we consider so-called nonautonomous random exponential attractors, see 
\cite{CaSo}. While random exponential attractors in the strict sense are positively $\varphi$-invariant, nonautonomous 
random exponential attractors are positively $\varphi$-invariant in the weaker, nonautonomous sense. To construct exponential attractors 
for time-continuous random dynamical systems 
that are positively $\varphi$-invariant
typically requires the H\"older continuity in time of the cocycle
which is a restrictive assumption. 
However, if we relax the invariance property and consider nonautonomous random exponential attractors instead, only the Lipschitz continuity of the cocycle in space is needed. In fact, the construction can be essentially simplified, we obtain better bounds for the fractal dimension and the assumption of H\"older continuity in time can be omitted, see \cite{CaSo}. Even though we could prove the H\"older continuity in time for the cocycle for our particular problem, we omit it since it has no added value for our main results and would lead to weaker bounds for the fractal dimension.

\begin{definition}\label{expattractor}
	A nonautonomous tempered random set $\{\mathcal{M}(t,\omega)\}_{t\in\R, \omega\in\Omega}$ is called a nonautonomous
	$\mathcal{D}$-random  (pullback) exponential attractor for $\varphi$ if there exists $\tilde t>0$ such that $\mathcal{M}(t+\tilde t,\omega)=\mathcal{M}(t,\omega)$ for all $t\in\R,\omega\in\Omega,$ and
	the following properties are satisfied:
	\begin{description}
		\item [a)] $\mathcal{M}(t,\omega)$ is compact for every $t\in\R,\omega\in\Omega$;
		\item [b)] $\{\mathcal{M}(t,\omega)\}_{t\in\R,\omega\in\Omega}$ is positively $\varphi$-invariant in the nonautonomous sense, i.e.
		$$\varphi(s,\omega,\mathcal{M}(t,\omega))\subseteq\mathcal{M}(s+t,\theta_{s}\omega)\quad\mbox{ for all } s\geq 0, t\in\R, \omega\in\Omega;$$
		\item [c)] $\{\mathcal{M}(t,\omega)\}_{t\in\R, \omega\in\Omega}$ is pullback $\mathcal{D}$-attracting at an exponential rate, i.e. there exists $\alpha>0$ such that
		$$\lim\limits_{s\to\infty}\text{e}^{\alpha s}d(\varphi(s,\theta_{-s}\omega,D(\theta_{-s}\omega)),\mathcal{M}(t,\omega))=0\quad\text{for all } D\in \mathcal{D}, t\in\R,\omega\in\Omega;
		$$
		\item [d)] the fractal dimension of $\{\mathcal{M}(t,\omega)\}_{t\in\R, \omega\in\Omega}$ is 
		finite, i.e. there exists a random variable $k(\omega)\geq 0$ such that 
		$$\sup_{t\in\R}\text{dim}_f(\mathcal{M}(t,\omega))\leq k(\omega)<\infty\quad\text{for all }  \omega\in \Omega.
		$$		
	\end{description}
\end{definition}

We recall that the fractal dimension of a 
precompact subset $M\subset X$ is defined as 
$$
\text{dim}_f(M)=\limsup_{\varepsilon\to 0}\log_{\frac{1}{\varepsilon}}(N_\varepsilon(M)),
$$
where $N_\varepsilon(M)$
denotes the minimal number of $\varepsilon$-balls  in $X$
with centers in $M$ needed to cover the set $M$.
\medskip

By Theorem \ref{thm_exist_attractor} the existence of a nonautonomous random exponential attractor 
immediately implies that the (global) random attractor exists. Moreover, the global random
attractor is contained in the random exponential attractor and hence, its fractal dimension is finite. 
\medskip 

Existence proofs for global and exponential random attractors are typically based on 
the existence of a pullback $\mathcal{D}$-absorbing set for $\varphi$. 

\begin{definition}\label{absorbing}
	A set $\{B(\omega)\}_{\omega\in\Omega}\in\mathcal{D}$ is called random pullback $\mathcal{D}$-absorbing for $\varphi$ if for every $D=\{D(\omega)\}_{\omega\in\Omega}\in\mathcal{D}$ and  $\omega\in\Omega$, there exists a random  time $T_{D}(\omega)\geq 0$ such that
	$$ \varphi(t,\theta_{-t}\omega,D(\theta_{-t}\omega))\subseteq B(\omega)\quad \mbox{ for all } t\geq T_{D}(\omega).$$
\end{definition}

The following condition is convenient to show the existence of an absorbing set. 
Namely, if for every $x\in D(\theta_{-t}\omega)$, $D\in\mathcal{D}$ and $\omega\in\Omega$ it 
holds that 
\begin{align}\label{criterion}
\limsup\limits_{t\to\infty} \|\varphi(t,\theta_{-t}\omega,x)\|_X\leq \rho(\omega),
\end{align}
where $\rho(\omega)>0$ for every $\omega\in\Omega$, then the ball $B(\omega):=B(0,\rho(\omega)+\delta)$ centered in $0$ with radius $\rho(\omega)+\delta$ for some constant $\delta>0,$ is a random absorbing set.
For further details and applications see~\cite{schmalfuss, BessaihGarridoSchmalfuss}.
\medskip

Instead of considering random exponential attractors which is typically more involved and 
requires to verify additional properties of the cocycle, 
the existence of random attractors is frequently shown using the following result, 
see Theorem 2.1 in \cite{schmalfuss}. 

\begin{theorem}\label{attractorbounded}
	Let $\varphi$ be a continuous random dynamical system on $X$ over $(\Omega,\mathcal{F},\mathbb{P},(\theta_{t})_{t\in\mathbb{R}})$. 
	Suppose that $\{B(\omega)\}_{\omega\in\Omega}$ is a compact random absorbing set for $\varphi$ in $\mathcal{D}$. 
	Then $\varphi$ has a unique $\mathcal{D}$-random attractor $\{\mathcal{A}(\omega)\}_{\omega\in\Omega}$ which is given by
	$$\mathcal{A}(\omega)=\bigcap\limits_{\tau\geq 0}\overline{\bigcup\limits_{t\geq\tau}\varphi(t,\theta_{-t}\omega,B(\theta_{-t}\omega))}. $$
\end{theorem}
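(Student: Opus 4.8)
The plan is to derive Theorem~\ref{attractorbounded} from Theorem~\ref{thm_exist_attractor} by verifying that the candidate set
$$
\mathcal{A}(\omega)=\bigcap_{\tau\geq 0}\overline{\bigcup_{t\geq\tau}\varphi(t,\theta_{-t}\omega,B(\theta_{-t}\omega))}
$$
is a compact random set that pullback attracts every $D\in\mathcal{D}$. Since Theorem~\ref{thm_exist_attractor} already gives existence and uniqueness of the $\mathcal{D}$-attractor from the existence of \emph{any} compact pullback-attracting random set, the main work is (i) to show $\{\mathcal{A}(\omega)\}$ is nonempty, compact and measurable, (ii) to show it is $\varphi$-invariant, and (iii) to show it attracts all of $\mathcal{D}$; one then checks that the attractor produced abstractly must coincide with this explicit $\omega$-limit-set formula.

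First I would fix $\omega$ and introduce the random $\omega$-limit set of the absorbing set itself, $\mathcal{A}(\omega)=\bigcap_{\tau\geq0}\overline{\bigcup_{t\geq\tau}\varphi(t,\theta_{-t}\omega,B(\theta_{-t}\omega))}$, and observe that since $B$ is absorbing, for any $\tau$ large enough (namely $\tau\geq T_B(\omega)$) the set $\bigcup_{t\geq\tau}\varphi(t,\theta_{-t}\omega,B(\theta_{-t}\omega))$ is contained in the fixed compact set $B(\omega)$; hence each set in the decreasing intersection is a nonempty closed subset of a compact set, so the intersection is nonempty and compact. Measurability of $\omega\mapsto\mathcal{A}(\omega)$ follows from the measurability of $\varphi$ together with standard measurable-selection arguments for countable intersections/unions of random closed sets (one may restrict to rational $\tau$ and $t$ by continuity of $\varphi$ in time, or invoke the corresponding lemma from \cite{crauel,CrKl}).

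Next I would establish the attraction property: for $D\in\mathcal{D}$ there is $T_D(\omega)$ with $\varphi(t,\theta_{-t}\omega,D(\theta_{-t}\omega))\subseteq B(\omega)$ for $t\geq T_D(\omega)$; using the cocycle property to split $\varphi(t,\theta_{-t}\omega,\cdot)=\varphi(t-s,\theta_{-(t-s)}\omega,\cdot)\circ\varphi(s,\theta_{-t}\omega,\cdot)$ with $s=T_D(\theta_{-(t-s)}\omega)$ or similar, one reduces convergence of $d(\varphi(t,\theta_{-t}\omega,D(\theta_{-t}\omega)),\mathcal{A}(\omega))$ to $0$ to a standard $\omega$-limit-set/compactness argument: if it failed there would be sequences $t_n\to\infty$, $x_n\in D(\theta_{-t_n}\omega)$ with $\varphi(t_n,\theta_{-t_n}\omega,x_n)$ staying a fixed distance from $\mathcal{A}(\omega)$; but these points eventually lie in the compact $B(\omega)$, so a subsequence converges to a limit point which, by construction, belongs to $\mathcal{A}(\omega)$ — a contradiction. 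The $\varphi$-invariance $\varphi(t,\omega,\mathcal{A}(\omega))=\mathcal{A}(\theta_t\omega)$ is then obtained from continuity of $\varphi(t,\omega,\cdot)$ and the cocycle property by the usual two inclusions, again exploiting that orbits of the absorbing set are precompact.

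I expect the genuinely delicate point to be the \textbf{measurability} of $\{\mathcal{A}(\omega)\}_{\omega\in\Omega}$ and the careful bookkeeping of the random absorption times under the shift $\theta_{-t}$ — in particular ensuring that the "eventually in $B(\omega)$" property can be used uniformly enough along sequences $t_n\to\infty$, which is where one needs the tempered class $\mathcal{D}$ and the $(\theta_t)$-invariant full-measure set $\Omega$ on which all these properties are assumed to hold. The compactness and attraction parts are essentially the classical Crauel--Flandoli--Schmalfuss argument transported verbatim to the pullback/random setting; once attraction of a compact random set is in hand, Theorem~\ref{thm_exist_attractor} finishes the proof, and a short uniqueness argument shows the attractor it produces is exactly the $\omega$-limit set written above.
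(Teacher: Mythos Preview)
The paper does not actually prove Theorem~\ref{attractorbounded}; it is quoted as a known background result with a reference to Theorem~2.1 in \cite{schmalfuss} (see the sentence immediately preceding the statement), so there is no proof in the paper to compare your attempt against. Your outline is the classical Crauel--Flandoli--Schmalfu\ss{} argument and is essentially sound: compactness of $\mathcal{A}(\omega)$ via the decreasing intersection of closed sets inside the compact $B(\omega)$, attraction by the subsequence/compactness contradiction, and invariance via the cocycle property are exactly how the result is established in \cite{schmalfuss,crauel,CrKl}; the measurability of the $\omega$-limit set is, as you correctly flag, the technically delicate step and is handled in those references by projection/selection theorems on Polish spaces.
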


We could apply Theorem \ref{attractorbounded} to prove the existence of a random attractor 
 for our particular
problem. However, showing that a nonautonomous random exponential attractor exists does not only imply the existence of the random attractor, but also  its finite fractal dimension. Moreover,  
it turns out to be even simpler in our case than 
applying Theorem \ref{attractorbounded}. To this end we use an existence result for 
random exponential attractors obtained in \cite{CaSo} that we recall in the next subsection.

\subsection{An existence result for random exponential attractors}\label{subsec_expattr}

The existence result for random pullback exponential attractors 
is based on an auxiliary normed space that is compactly embedded into the phase space 
and the entropy properties of this embedding. 
We recall 
some notions and results that we will need in the sequel, see also \cite{CaSo,CarSo,CarSo2}.
\medskip

The (Kolmogorov) $\varepsilon$-entropy of a precompact subset $M$ of a Banach space $X$
is defined as 
$$
\mathcal H_\varepsilon^X(M)=\log_2(N_\varepsilon^X(M)),
$$
where $N_\varepsilon^X(M)$
denotes the minimal number of $\varepsilon$-balls in $X$
with centres in $M$ needed to cover the set $M$.
It was first introduced by Kolmogorov and Tihomirov in \cite{KoTi}.
The order of growth of $\mathcal H_\varepsilon^X(M)$ as $\varepsilon$ tends to zero is a measure for the massiveness 
of the set $M$ in $X$, even if its fractal dimension is infinite.


If $X$ and $Y$ are Banach spaces such that the embedding $Y\hookrightarrow X$
is compact, we use the notation
$$
\mathcal H_\varepsilon(Y;X)=\mathcal H_\varepsilon^X(B^Y(0,1)),
$$
where $B^Y(0,1)$ denotes the closed unit ball in $Y$.

\begin{remark}\label{rmk_entropy2}
The $\varepsilon$-entropy is related to the entropy numbers $\hat e_k$ for the embedding 
$Y\hookrightarrow X,$ which are defined by
$$
\hat e_k=\inf\Big\{ \varepsilon>0 : B^Y(0,1)\subset \bigcup_{j=1}^{2^{k-1}}B^X(x_j,\varepsilon),\ x_j\in X, \ j=1,\dots,2^{k-1}\Big\},
$$
$k\in\N.$
If the embedding  is compact, then $\hat e_k$ is finite for all $k\in\N$.
For certain function spaces the entropy numbers can explicitly be estimated (see \cite{EdTr}).
For instance, if $D\subset \R^n$ is a smooth bounded domain, then the embedding of the Sobolev 
spaces 
$$
W^{l_1,p_1}(D)\hookrightarrow W^{l_2,p_2}(D),\qquad l_1,l_2\in\R, \ p_1,p_2\in(1,\infty),
$$
is compact if $l_1>l_2$ and $\frac{l_1}{n} - \frac{1}{p_1} > \frac{l_2}{n}-\frac{1}{p_2}.$
Moreover, the entropy numbers grow polynomially, namely,  
$$
\hat e_k \simeq k^{-\frac{l_1-l_2}{n}}
$$
(see Theorem 2, Section 3.3.3 in \cite{EdTr}),
and consequently, 
$$
\mathcal H_\varepsilon(W^{l_1,p_1}(D);W^{l_2,p_2}(D))\leq c
\varepsilon^{-\frac{n}{l_1-l_2}},
$$
for some constant $c>0$.
Here, we write $f\simeq g,$ if there exist positive constants 
$c_1$ and $c_2$ such that 
$$
c_1f\leq g \leq c_2f.
$$
\end{remark}

The following existence result for nonautonomous random pullback exponential attractors is a 
special case of the main result in \cite{CaSo}. In fact, we formulate a simplified 
version that  suffices for the parabolic stochastic  evolution problem  we consider.
In particular, we assume that the cocycle is uniformly Lipschitz continuous and satisfies the smoothing property with a constant 
that is independent of $\omega$. 
More generally, one can allow that the constants depend on the random parameter 
$\omega$ and that the cocycle is asymptotically compact, i.e. it is the 
sum of a mapping satisfying the smoothing property and a contraction.  

\begin{theorem}\label{thm_rexpa}
	Let $\varphi$ be a  random dynamical system in a separable Banach space $X$ and let $\mathcal{D}$ 
	denote the universe of tempered random sets. 
	Moreover, we assume that the following properties hold for  all $\omega\in\Omega$:
	\begin{itemize}
	\item[$(H_1)$] Compact embedding: 
	There exists  another separable Banach space $Y$ that is compactly and densely embedded into $X$.
	\item[$(H_2)$] Random pullback absorbing set: 
	There exists a random closed set $B\in\mathcal{D}$ that is pullback $\mathcal{D}$-absorbing, and 
	the absorbing time corresponding to a random set $D\in\mathcal{D}$ satisfies $T_{D,\theta_{-t}\omega}\leq T_{D,\omega}$ for all $t\geq 0$.
	\item[$(H_3)$] Smoothing property:
	There exists $\tilde t>T_{B,\omega}$ and a constant $\kappa>0$ such that 
	$$
	\|\varphi(\tilde t,\omega,u)-\varphi(\tilde t,\omega,v)\|_Y\leq \kappa\|u-v\|_X\qquad \forall u,v\in B(\omega).
	$$
	\item[$(H_4)$] Lipschitz continuity: There exists 
	a constant $L_\varphi>0$ such that 
	$$
	\|\varphi(s,\omega,u)-\varphi(s,\omega,v)\|_{X}\leq L_\varphi\|u-v\|_{X}\qquad \forall s\in[0,\tilde{t}],\ u,v\in B(\omega).
	$$
	\end{itemize}
	Then, for every $\nu\in(0,\frac{1}{2})$ 
	there exists a nonautonomous random pullback exponential attractor, and its fractal dimension is uniformly bounded by  
	$$
	\textnormal{dim}_f(\mathcal{M}^\nu(t,\omega))\leq 
	\log_{\frac{1}{2\nu}}\left(N_{\frac{\nu}{\kappa}}^X(B^Y(0,1))\right) \qquad \forall t\in\R,\ \omega\in\Omega.
	$$
\end{theorem}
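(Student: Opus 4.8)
\proof
The statement is a simplified special case of the general construction in \cite{CaSo}, and the plan is to follow that construction in the present, easier setting: first build a \emph{discrete} exponential attractor for the time-$\tilde t$ map on the absorbing set by an iterated covering argument, then extend it to continuous time using the Lipschitz estimate $(H_4)$. Write $S_\omega:=\varphi(\tilde t,\omega,\cdot)$, so that by the cocycle property $\varphi(n\tilde t,\theta_{-n\tilde t}\omega,\cdot)=S_{\theta_{-\tilde t}\omega}\circ\cdots\circ S_{\theta_{-n\tilde t}\omega}$ for $n\ge1$, and set $K_n(\omega):=\varphi(n\tilde t,\theta_{-n\tilde t}\omega,B(\theta_{-n\tilde t}\omega))$. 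Using $\tilde t>T_{B,\omega}$ together with the monotonicity $T_{B,\theta_{-t}\omega}\le T_{B,\omega}$ from $(H_2)$, one first checks that $K_n(\omega)\subseteq B(\omega)$ for all $n\ge1$, that $(K_n(\omega))_{n\ge0}$ is a decreasing sequence of nonempty sets, and that $K_{n+1}(\theta_{\tilde t}\omega)=S_\omega(K_n(\omega))$. Finally, since $B\in\cD$ is tempered, the numbers $\delta_n(\omega):=\mathrm{diam}_X(B(\theta_{-n\tilde t}\omega))$ grow subexponentially in $n$, i.e.\ $\frac{1}{n}\log\delta_n(\omega)\to0$.

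The core is the covering step. Fix $\nu\in(0,\frac{1}{2})$ and put $N:=N^X_{\nu/\kappa}(B^Y(0,1))$, which is finite by the compact embedding $(H_1)$. If $e\in B(\omega)$ and $r>0$, then the smoothing property $(H_3)$ gives $S_\omega\bigl(B(\omega)\cap B^X(e,r)\bigr)\subseteq B^Y(S_\omega e,\kappa r)$; covering the unit ball $B^Y(0,1)$ by $N$ balls of $X$-radius $\nu/\kappa$ and rescaling, one obtains a cover of $S_\omega\bigl(B(\omega)\cap B^X(e,r)\bigr)$ by at most $N$ balls of $X$-radius $2\nu r$, one of which may be taken centred at $S_\omega e$ and the others centred in the image set. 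Applying this recursively, I would construct for each $\omega$ and $n\ge0$ a finite set $\cM_n(\omega)\subseteq K_n(\omega)$ with $\#\cM_n(\omega)\le N^n$ that is a $\rho_n(\omega)$-net of $K_n(\omega)$, where $\rho_n(\omega):=(2\nu)^n\delta_n(\omega)$: start with $\cM_0(\omega):=\{v(\omega)\}$ for a (measurable) selection $v(\omega)\in B(\omega)$, and, given $\cM_n(\omega)$, let $\cM_{n+1}(\theta_{\tilde t}\omega)$ be the union of $S_\omega(\cM_n(\omega))$ with all the new centres obtained by applying the covering step to the balls $B^X(e,\rho_n(\omega))$, $e\in\cM_n(\omega)$. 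The two features that must be built in are $S_\omega(\cM_n(\omega))\subseteq\cM_{n+1}(\theta_{\tilde t}\omega)$ and $\cM_n(\omega)\subseteq K_n(\omega)$. Then define $\cM(\omega):=\overline{\bigcup_{n\ge0}\cM_n(\omega)}\subseteq B(\omega)$.

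It then remains to read off the required properties and to pass to continuous time. Compactness of $\cM(\omega)$ follows from total boundedness, since for $m\ge n$ one has $\cM_m(\omega)\subseteq K_m(\omega)\subseteq K_n(\omega)\subseteq\bigcup_{p\in\cM_n(\omega)}B^X(p,\rho_n(\omega))$ while $\bigcup_{m<n}\cM_m(\omega)$ is finite, and $\rho_n(\omega)\to0$ because $2\nu<1$ and $\delta_n(\omega)$ is subexponential. Positive invariance $S_\omega(\cM(\omega))\subseteq\cM(\theta_{\tilde t}\omega)$ follows from $S_\omega(\cM_n(\omega))\subseteq\cM_{n+1}(\theta_{\tilde t}\omega)$ by continuity of $S_\omega$ and closedness of $\cM(\theta_{\tilde t}\omega)$. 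For the fractal dimension, $\cM(\omega)$ is covered by at most $2N^n$ balls of radius $\rho_n(\omega)$ for every $n$, and since $\frac{1}{n}\log\delta_n(\omega)\to0$ this gives $\mathrm{dim}_f(\cM(\omega))\le\log_{1/(2\nu)}N$. Exponential pullback attraction under $S$ holds because, for $D\in\cD$, the set $\varphi(n\tilde t,\theta_{-n\tilde t}\omega,D(\theta_{-n\tilde t}\omega))$ is eventually contained in $K_m(\omega)$ with $m$ growing linearly in $n$, while $d_X\bigl(K_m(\omega),\cM(\omega)\bigr)\le\rho_m(\omega)$ decays exponentially. To conclude, set $\cM^\nu(t,\omega):=\varphi(\tau,\theta_{-\tau}\omega,\cM(\theta_{-\tau}\omega))$ with $\tau:=t-\tilde t\lfloor t/\tilde t\rfloor\in[0,\tilde t)$; this is $\tilde t$-periodic in $t$ with $\cM^\nu(0,\omega)=\cM(\omega)$, the nonautonomous invariance property follows from the cocycle property and the periodicity, compactness is preserved since $\varphi(\tau,\theta_{-\tau}\omega,\cdot)$ is continuous, and by $(H_4)$ this map is $L_\varphi$-Lipschitz on $B(\theta_{-\tau}\omega)\supseteq\cM(\theta_{-\tau}\omega)$, so it neither increases the fractal dimension---which yields the stated uniform bound $\mathrm{dim}_f(\cM^\nu(t,\omega))\le\log_{1/(2\nu)}(N^X_{\nu/\kappa}(B^Y(0,1)))$---nor destroys the exponential attraction rate, after possibly shrinking it slightly. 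Measurability of $\omega\mapsto\cM(\omega)$, hence of $\cM^\nu$, is obtained by carrying out the selection $v(\cdot)$ and the choice of covering centres measurably, via a measurable selection theorem.

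The main difficulty lies in the covering construction itself: one must simultaneously force $S_\omega(\cM_n(\omega))$ to be \emph{among} the centres of the next net---so that $\cM$ is exactly, and not merely approximately, positively invariant---and keep the net radii $\rho_n(\omega)=(2\nu)^n\delta_n(\omega)$ under control, the $\omega$-dependence entering only through the tempered, hence subexponential, diameters $\delta_n(\omega)$; this is precisely what makes $\cM(\omega)$ compact and makes the dimension bound $\log_{1/(2\nu)}\bigl(N^X_{\nu/\kappa}(B^Y(0,1))\bigr)$ independent of $\omega$. By comparison, the reduction to the discrete cocycle, the passage back to continuous time, and the uses of $(H_4)$ are routine and are carried out in detail in \cite{CaSo}.
\qed
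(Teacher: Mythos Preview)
The paper does not give its own proof of this theorem: it is stated as a simplified special case of the main result in \cite{CaSo} and no argument is supplied beyond the reference. Your proposal is precisely a sketch of the construction carried out in \cite{CaSo}---the iterated covering argument for the discrete time-$\tilde t$ cocycle on the absorbing set, followed by the extension to continuous time via the Lipschitz estimate $(H_4)$---so there is nothing in the paper to compare it against, and your outline is consistent with the cited source.

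One small remark on the sketch itself: the recentering step that turns a $\nu r$-cover into a $2\nu r$-cover with centres in the image set is exactly what forces the restriction $\nu\in(0,\tfrac12)$ and produces the base $1/(2\nu)$ in the dimension bound, so your bookkeeping there is correct. The measurability of the iterated nets is the one place where your account is genuinely cursory; in \cite{CaSo} this is handled by fixing once and for all a countable dense subset of $X$ and choosing the covering centres and the selection $v(\omega)$ from it in a canonical (e.g.\ lexicographically first) way, so that each $\cM_n(\omega)$ is a measurable finite-set-valued map. This is a routine but necessary ingredient, and you should at least point to it rather than invoking an unspecified selection theorem.
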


\subsection{Pathwise mild solutions for parabolic SPDEs}\label{subs_spdes}

Let $\Delta:=\{(s,t)\in\R^2:\mbox{ } s\leq t\}$, $X$ be a separable, reflexive, type 2 Banach space and 
$(\overline\Omega,\overline{\mathcal{F}}, \overline{\mathbb{P}})$ be a probability space.
Similarly to~\cite{PronkVeraar}, we consider nonautonomous SPDEs of the form
\begin{align}\label{eqinit}
du(t) &= A(t,\overline{\omega}) u(t) ~\txtd t + F(u(t)) ~\txtd t + \sigma(t,u(t))~\txtd W_{t},&& t>s,\\
u(s)&=u_{0} \in X, && s\in\R,\nonumber
\end{align}
where $A=\{A(t,\overline\omega)\}_{t\in\R,\overline{\omega}\in\overline{\Omega}}$ is a family of time-dependent random differential operators. Intuitively, this means that the differential operator depends on a stochastic processes, in a meaningful way which will be specified later.
\medskip

We aim to investigate the longtime behavior of~\eqref{eqinit} using a random dynamical systems approach. First, we recall sufficient conditions that ensure that the family $A$ generates 
a parabolic stochastic evolution system, see \cite{PronkVeraar}.
In particular, 
 we make the following assumptions concerning measurability, sectoriality and 
H\"older continuity of the operators.
\medskip

\newpage
\textbf{Assumptions 1}
\begin{itemize}
\item [(A0)] We assume that the operators are closed, densely defined and have a common domain, 
$\mathcal{D}_A:=D(A(t,\overline\omega))$ for all $t\in\R$, $\overline{\omega}\in\overline{\Omega}$. 
	\item [(A1)] The mapping $A:\R\times\overline{\Omega}\to\mathcal{L}(\mathcal{D}_A,X)$ is strongly measurable and adapted.
	\item [(A2)] There exists $\vartheta\in(\pi,\frac{\pi}{2})$ and $M>0$ such that $\Sigma_\vartheta:=\{\mu\in\C : |\text{arg }\mu|<\vartheta\}\subset \rho(A(t,\overline{\omega}))$ and 
	$$
	\|R(\mu,A(t,\overline{\omega}))\|_{\mathcal{L}(X)}\leq \frac{M}{|\mu|+1}\qquad \text{for all}\ \mu\in \Sigma_\vartheta\cup\{0\}, t\in\R, \overline{\omega}\in\overline{\Omega}.
	$$
	
	
	\item [(A3)] There exists $\nu\in(0,1]$ and a mapping $C:\overline{\Omega}\to X$ such that
	\begin{align}\label{kt}
	\|A(t,\overline{\omega}) - A(s,\overline{\omega})\|_{\mathcal{L}(\mathcal{D}_A,X)} \leq C(\overline\omega) |t-s|^{\nu}\qquad \text{for all}\ s,t\in\R,~ \overline{\omega}\in\overline{\Omega},	
	\end{align}
	where we assume that $C(\overline\omega)$ is uniformly bounded with respect to $\overline\omega$, see~\cite{PronkVeraar}. 
\end{itemize}

Assumptions (A2) and (A3) are referred to in the literature as the Kato-Tanabe assumptions, compare~\cite{Amann2}, p.~55, or~\cite{pazy}, p.~150, and are common in the context of nonautonomous evolution equations. 
Since the constants in (A2) and (A3) are uniformly bounded 
w.r.t. $\overline{\omega}$, all constants arising in the estimates below do not dependent on 
$\overline{\omega}$.
\\

In the sequel, we denote by $X_\eta$, $\eta\in(-1,1]$, the fractional power spaces $D((-A(t,\overline{\omega}))^\eta)$
endowed with the norm $\|x\|_{X_\eta}=\|(-A(t,\overline\omega))^\eta x\|_X$ for $t\in\mathbb{R}$,  $\overline{\omega}\in\overline\Omega$ and $x\in X_\eta$.
\medskip

\textbf{Assumptions 2}
\begin{itemize}
	\item[(AC)] We assume that the operators $A(t,\overline{\omega}), t\in\R,\overline{\omega}\in\overline{\Omega},$ have a compact inverse. This implies that the embeddings 
$X_\eta\hookrightarrow X$, $\eta\in(0,1]$, are compact.
	\item [(U)] The evolution family is uniformly exponentially stable, i.e. there exist constants $\lambda>0$ and $c>0$ such that
	\begin{align}
	&\|U(t,s,\overline{\omega})\|_{\mathcal{L}(X)} \leq c e ^{-\lambda (t-s)} \quad \mbox{for all } (s,t)\in\Delta \mbox{ and } \overline{\omega}\in\overline{\Omega}.\label{omega:constant}
	\end{align}
	\item [(Drift)] The nonlinearity $F:X\to X$ is globally Lipschitz continuous, i.e.~there exists a constant $C_{F}>0$ such that
	\begin{align*}
	\|F(x) -F(y)\|_{X}\leq C_{F}\|x-y\|_{X}\quad \mbox{for all } x,y\in X \mbox{ and } \overline\omega\in\overline{\Omega}.
	\end{align*}
	 This implies a linear growth condition on $F$. Namely, there exist a positive constant $\overline{C}_{F}$ such that
	\begin{align}\label{constants:F}
	\|F(x)\|_{X}\leq \overline{C}_{F} + C_{F}\|x\|_{X}\quad \mbox{ for all } x\in X \mbox{ and } \overline\omega\in\overline{\Omega}.
	\end{align}
Furthermore, we assume that $\lambda - c C_{F}>0$.
	\item [(Noise)] We assume that $W(t)$ is a two-sided Wiener process with 
	values in $X_{\beta}$, $\beta \in(0,1]$. Furthermore, we set $\sigma(t,u):=\sigma>0$.
\end{itemize}

Based on the Assumptions 1, by applying~\cite[Thm.~2.3]{Acq} pointwise in $\overline{\omega}\in\overline{\Omega}$ we obtain the following theorem,
see \cite[Theorem 2.2]{PronkVeraar}. The measurability was shown in \cite[Proposition 2.4]{PronkVeraar}.
Before we state the result we recall the definition of strong measurability of random operators. 
\begin{definition}
Let $X_1$ and $X_2$ be two separable Banach spaces. A random operator $L:\overline{\Omega}\times X_1\to X_2$ is called strongly measurable if the mapping $\overline{\omega}\mapsto L(\overline{\omega})x$, $\bar\omega\in\overline{\Omega}$, is a random variable on $X_2$ for every $x\in X_1$.
\end{definition}

\begin{theorem}\label{thm_generation}
	There exists a unique parabolic evolution system $U:\Delta\times\overline{\Omega}\rightarrow\mathcal{L}(X)$ with the following properties:
	
	\begin{enumerate}
		\item [$1)$]  $U(t,t,\overline{\omega})=\mbox{Id}$ \ for all $t\geq 0$, $\overline{\omega}\in\overline{\Omega}$.
		\item [$2)$] We have
		\begin{equation}\label{cozyklus}
		U(t,s,\overline{\omega})U(s,r,\overline{\omega})=U(t,r,\overline{\omega})
		\end{equation}
		 for all $0\leq r\leq s\leq t$, $\overline{\omega}\in\overline{\Omega}$.
		\item [$3)$] The mapping $U(\cdot,\cdot,\overline\omega)$ is strongly continuous for all $\overline{\omega}\in\overline{\Omega}$.
		\item [$4)$] For $s<t$ the following identity 
		holds pointwise in $\overline{\Omega}$ $$\frac{d}{dt}U(t,s,\overline{\omega})=A(t,\overline{\omega})U(t,s,\overline{\omega}).
		$$
		\item [$5)$] The evolution system $U:\Delta\times\overline{\Omega}\to\mathcal{L}(X)$ is strongly 
		measurable in the uniform operator topology. Moreover, for every $t\geq s$, 
		the mapping $\overline{\omega}\mapsto U(t,s,\overline{\omega})\in \mathcal{L}(X)$ is strongly $\mathcal{F}_t$-measurable in the uniform operator topology. 	
	\end{enumerate}
\end{theorem}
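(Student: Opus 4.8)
The plan is to obtain Theorem~\ref{thm_generation} by invoking the deterministic generation theory for nonautonomous parabolic problems --- specifically \cite[Thm.~2.3]{Acq} --- separately for each fixed $\overline{\omega}\in\overline{\Omega}$, and then to recover the measurability statements in $4)$ and $5)$ by a separability argument. First I would fix $\overline{\omega}\in\overline{\Omega}$ and check that the family $(A(t,\overline{\omega}))_{t\in\R}$ satisfies the hypotheses of the deterministic Acquistapace--Terreni type theorem: by (A0) the operators are closed, densely defined with a common domain $\mathcal{D}_A$; by (A2) each $A(t,\overline{\omega})$ is sectorial with sector $\Sigma_\vartheta$ and resolvent bound $M/(|\mu|+1)$ uniform in $t$; and by (A3) the map $t\mapsto A(t,\overline{\omega})\in\mathcal{L}(\mathcal{D}_A,X)$ is globally $\nu$-H\"older continuous with constant $C(\overline{\omega})$. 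These are exactly the Kato--Tanabe conditions, so the cited deterministic result yields, for this fixed $\overline{\omega}$, a unique parabolic evolution system $U(\cdot,\cdot,\overline{\omega})$ satisfying $1)$--$4)$ pointwise. Since by assumption $C(\overline{\omega})$ is uniformly bounded in $\overline{\omega}$, all constants in the standard a priori estimates for $U$ (in particular in the graph-norm and fractional-power-norm bounds) are independent of $\overline{\omega}$, which is what will later allow the uniform Lipschitz and smoothing estimates in Section~\ref{ass:attr}.

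Next I would assemble the pointwise-in-$\overline{\omega}$ constructions into a single map $U:\Delta\times\overline{\Omega}\to\mathcal{L}(X)$ and verify the measurability claims. The natural route, following \cite[Prop.~2.4]{PronkVeraar}, is to use the explicit representation of $U(t,s,\overline{\omega})$ as the sum of the ``frozen-coefficient'' semigroup $\txte^{(t-s)A(s,\overline{\omega})}$ and a Volterra-type series whose kernels are built from the resolvents $R(\mu,A(t,\overline{\omega}))$. By (A1) the map $\overline{\omega}\mapsto A(t,\overline{\omega})x$ is measurable for every $x\in\mathcal{D}_A$, hence so is $\overline{\omega}\mapsto R(\mu,A(t,\overline{\omega}))x$ for each $x\in X$ (resolvents depend continuously, hence measurably, on the operator in the strong topology); the frozen semigroup is then obtained by a strongly-measurable contour integral of the resolvent, and the correction series converges in $\mathcal{L}(X)$ uniformly on compact subsets of $\Delta$ with the uniform-in-$\overline{\omega}$ bounds from the previous paragraph. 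A uniform limit of strongly measurable maps is strongly measurable, giving $5)$; the strong continuity in $3)$ and the identities $1)$, $2)$, $4)$ pass from the pointwise construction. The adaptedness refinement --- that $\overline{\omega}\mapsto U(t,s,\overline{\omega})$ is strongly $\mathcal{F}_t$-measurable --- follows because each $A(r,\overline{\omega})$ with $r\le t$ is $\mathcal{F}_r\subset\mathcal{F}_t$-measurable by the ``adapted'' part of (A1), and all the operations (contour integrals, series, products) preserve $\mathcal{F}_t$-measurability.

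I expect the only genuinely technical point to be the measurability and convergence bookkeeping in step two: one must justify that the contour integral defining $\txte^{(t-s)A(s,\overline{\omega})}$ and the infinite series representing the parametrix correction are strongly measurable in $\overline{\omega}$ and converge in $\mathcal{L}(X)$ with constants that do not depend on $\overline{\omega}$. This is where the uniform boundedness of $C(\overline{\omega})$ in (A3) is essential, and it is precisely the content of \cite[Prop.~2.4]{PronkVeraar}, so in the write-up I would simply cite that proposition together with \cite[Thm.~2.2]{PronkVeraar} and \cite[Thm.~2.3]{Acq}, noting that the proof is a pointwise-in-$\overline{\omega}$ application of the deterministic theory combined with the measurable-selection/limit argument above. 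The remaining items $1)$--$4)$ are then immediate consequences of the deterministic statement applied for each $\overline{\omega}$ separately.
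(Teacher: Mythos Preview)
Your proposal is correct and follows essentially the same approach as the paper: the paper does not give an independent proof but simply states that properties $1)$--$4)$ follow by applying \cite[Thm.~2.3]{Acq} pointwise in $\overline{\omega}$ (as packaged in \cite[Theorem~2.2]{PronkVeraar}), and that the measurability in $5)$ is \cite[Proposition~2.4]{PronkVeraar}. Your write-up in fact unpacks more of the mechanism behind the measurability argument than the paper does; the only slip is the phrase ``measurability statements in $4)$ and $5)$'' in your first paragraph---item $4)$ is the differential identity, not a measurability claim---but you handle it correctly in the body.
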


To prove the existence of random attractors we need additional smoothing properties of the 
evolution system. 
The following properties and estimates were shown in Lemma 2.6 and Lemma 2.7 in \cite{PronkVeraar}.
The exponential decay is a consequence of our assumption $(U)$.

\begin{lemma}\label{lem_estimates}
We assume that the family of 
adjoint operators $A^*(t,\overline{\omega})$ satisfies $(A3)$ with exponent $\nu^*>0$. Then, for every $t>0$, the mapping $s\mapsto U(t,s,\overline{\omega})$ belongs to $C^1([0,t);\mathcal{L}(X))$, and for all $x\in\mathcal{D}_A$ one has 
$$\frac{d}{ds}U(t,s,\overline{\omega})x=-U(t,s,\overline{\omega})A(s,\overline{\omega})x.
$$ 

Moreover, for
$\alpha\in [0,1]$ and $\eta\in(0,1)$ there exist positive 
constants $\widetilde{C}_\alpha, \widetilde{C}_{\alpha,\eta}$ such that the following estimates hold for $t>s$ and $\bar\omega\in\overline{\Omega}$:
		\begin{align*}
		\|(-A(t,\overline\omega))^\alpha U(t,s,\overline\omega)x\|_X &\leq \widetilde{C}_\alpha \frac{\text{e}^{-\lambda(t-s)}}{(t-s)^{\alpha}}\|x\|_X, 
		&&x\in X;\\
		\|U(t,s,\overline\omega)(-A(s,\overline\omega))^\alpha x\|_X &\leq \widetilde{C}_\alpha\frac{e^{-\lambda(t-s)}}{(t-s)^{\alpha}}\|x\|_X,
		&& x\in X_\alpha;\\
		\|(-A(t,\overline\omega))^{-\alpha} U(t,s,\overline\omega) (-A(s,\overline\omega))^\eta x\|_X& \leq \widetilde{C}_{\alpha,\eta} \frac{\text{e}^{-\lambda(t-s)}}{(t-s)^{\eta -\alpha}}\|x\|_X, &&x\in X_\eta.		
		\end{align*}
\end{lemma}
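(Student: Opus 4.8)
The plan is to deduce everything from the classical parabolic regularity theory of Acquistapace--Terreni applied pathwise in $\overline{\omega}\in\overline{\Omega}$ (so that all constants are $\overline\omega$-independent, by the standing assumption that the constants in $(A2)$--$(A3)$ are), and to put in the exponential decay afterwards by means of the evolution (cocycle) identity of Theorem~\ref{thm_generation} together with hypothesis $(U)$. Concretely, the first identity and the three estimates \emph{without} the weight $\text{e}^{-\lambda(t-s)}$ are nothing but \cite[Lemmas~2.6 and~2.7]{PronkVeraar} (which rest on \cite{Acq}), so I would invoke those and only carry out the short computation producing the exponential factor.

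For the differentiability claim: by reflexivity of $X$ the family of adjoints $\{A^*(t,\overline\omega)\}_{t\in\R,\overline\omega\in\overline\Omega}$ again satisfies $(A0)$--$(A3)$ — with the H\"older exponent $\nu^*>0$ assumed in the statement — hence generates a well-behaved backward evolution family, namely $(s,t)\mapsto U(t,s,\overline\omega)^*$. Transporting its regularity back to $U$ gives, for fixed $t>0$, that $s\mapsto U(t,s,\overline\omega)\in C^1([0,t);\cL(X))$ and
\[
\frac{d}{ds}U(t,s,\overline\omega)x=-U(t,s,\overline\omega)A(s,\overline\omega)x\qquad(x\in\mathcal{D}_A).
\]
The bookkeeping reason for the precise form of the derivative is that differentiating the cocycle identity $U(t,s,\overline\omega)U(s,r,\overline\omega)=U(t,r,\overline\omega)$ in $s$ and using property~$4)$ of Theorem~\ref{thm_generation} forces it; continuity of $s\mapsto U(t,s,\overline\omega)A(s,\overline\omega)x$ then follows from the strong continuity of $U$, from $(A3)$, and from the uniform comparability $\|A(s,\overline\omega)A(r,\overline\omega)^{-1}\|_{\cL(X)}\le C$ of the operators at different times. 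All of this is carried out in \cite[Lemma~2.6]{PronkVeraar}.

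For the three estimates, \cite[Lemma~2.7]{PronkVeraar} provides, on bounded time intervals, $\|(-A(t,\overline\omega))^\alpha U(t,s,\overline\omega)\|_{\cL(X)}\le C_\alpha(t-s)^{-\alpha}$, the symmetric bound $\|U(t,s,\overline\omega)(-A(s,\overline\omega))^\alpha\|_{\cL(X)}\le C_\alpha(t-s)^{-\alpha}$, and $\|(-A(t,\overline\omega))^{-\alpha}U(t,s,\overline\omega)(-A(s,\overline\omega))^\eta\|_{\cL(X)}\le C_{\alpha,\eta}(t-s)^{\alpha-\eta}$ — the negative powers $(-A(t,\overline\omega))^{-\alpha}$ being bounded thanks to $(AC)$. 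To insert $\text{e}^{-\lambda(t-s)}$, fix $t>s$. If $t-s\le 1$ then $\text{e}^{-\lambda(t-s)}\ge\text{e}^{-\lambda}$ and the weight is absorbed into the constant. If $t-s>1$ I would split, e.g.\ $U(t,s,\overline\omega)=U(t,t-1,\overline\omega)\,U(t-1,s,\overline\omega)$ (respectively $U(t,s,\overline\omega)=U(t,s+1,\overline\omega)\,U(s+1,s,\overline\omega)$ for the second estimate, and $U(t,s,\overline\omega)=U(t,\tfrac{t+s}{2},\overline\omega)\,U(\tfrac{t+s}{2},s,\overline\omega)$ for the third), estimate the factor carrying the fractional power by the corresponding short-time bound and the remaining factor by $(U)$, i.e.\ by $c\,\text{e}^{-\lambda(t-s-1)}$; since $r\mapsto r^{\theta}\text{e}^{-\lambda r/2}$ is bounded on $[1,\infty)$ for every $\theta\ge0$, one may then trade a factor $\text{e}^{-\lambda(t-s)/2}$ for the required polynomial weight and recover the stated form (at worst with $\lambda$ replaced by a smaller positive rate, which is immaterial for the rest of the paper).

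The only genuinely analytic ingredient is the $C^1$-in-$s$ regularity and the identity for $\partial_sU(t,s,\overline\omega)$; everything else is manipulation of the cocycle property and $(U)$. Accordingly, the main obstacle to a self-contained proof would be reproving that differentiability statement from scratch, which is precisely why I would instead quote \cite[Lemmas~2.6 and~2.7]{PronkVeraar} and limit the argument here to the exponential-decay step.
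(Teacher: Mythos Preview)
Your proposal is correct and matches the paper's approach: the paper does not give a proof at all but simply records that ``the following properties and estimates were shown in Lemma~2.6 and Lemma~2.7 in \cite{PronkVeraar}'' and that ``the exponential decay is a consequence of our assumption~$(U)$'', which is exactly what you do, only with the decay step spelled out. Your honest remark that the splitting argument may yield the bounds with a slightly smaller rate $\lambda'<\lambda$ is accurate and, as you note, immaterial for the applications later in the paper (the condition $\lambda>cC_F$ in (Drift) is open).
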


To shorten notations, in the sequel we omit the $\overline{\omega}$-dependence of $A$ and $U$ if there is no danger of confusion. 
The classical mild formulation of the SPDE ~\eqref{eqinit} is 
\begin{align}\label{pms}
u(t) = U(t,0)u_{0} + \int\limits_{0}^{t} U(t,s)F(u(s)) ~\txtd s + \sigma \int\limits_{0}^{t} U(t,s)~\txtd W(s).
\end{align}
However, the It\^{o}-integral is not well-defined since the mapping $\overline{\omega}\mapsto U(t,s,\overline{\omega})$ 
is, in general, only $\mathcal{F}_{t}$-measurable and not $\mathcal{F}_{s}$-measurable, see
\cite[Prop.~2.4]{PronkVeraar}. To overcome this problem Pronk and Veraar introduced in \cite{PronkVeraar} the concept of pathwise mild solutions. In our particular case, this notion leads to the integral representation
\begin{align}\label{sol:1}
\begin{split}
u(t)=&\  U (t,0) u_{0} + \sigma U(t,0) W(t) + \int\limits_{0}^{t} U(t,s)F(u(s))~\txtd s\\
&\ -\sigma \int\limits_{0}^{t}U(t,s)A(s) (W(t) -W(s)) ~\txtd s.
\end{split}
\end{align}
The formula is motivated by formally applying integration by parts for the stochastic integral, and, as shown in~\cite{PronkVeraar}, it indeed yields a pathwise representation for the solution.\\

Our aim is to show the existence of random attractors for SPDEs using this concept of pathwise mild solutions.
It allows us to study random attractors without transforming the SPDE into a random PDE, as it is typically done.
\begin{remark}
	We emphasize that the concept of pathwise mild solutions also applies if $\sigma$ is not constant, see~\cite[Sec.5]{PronkVeraar}. In this case, the solution of~\eqref{eqinit} is given by
	\begin{align}
	u(t) = U(t,0)u_{0} &+ U(t,0) \int\limits_{0}^{t}\sigma(s,u(s))~\txtd W(s) + \int\limits_{0}^{t} U(t,s)F(u(s))~\txtd s\label{one}\\& - \int\limits_{0}^{t} U(t,s)A(s)\int\limits_{s}^{t}\sigma(\tau,u(\tau))~\txtd W(\tau)~\txtd s. \label{two}
	\end{align}
	However, it is not possible to obtain a random dynamical system in this case, due to the presence of the stochastic integrals in~\eqref{one} and~\eqref{two} which are not defined in a  pathwise sense. Consequently, this representation formula does not hold for {\em every} $\overline{\omega}\in\overline{\Omega}$.
	We aim to investigate this issue in a future work.
\end{remark}

Recalling that $W$ is an $X_{\beta}$-valued Wiener process, 
we introduce the canonical probability space
\begin{align}\label{mds}
\Omega:=(C_{0}(\mathbb{R};X_{\beta}),\mathcal{B}(C_{0}(\mathbb{R};X_{\beta})),\mathbb{P})
\end{align}
and identify $W(t,\omega)=:\omega(t)$, for $\omega\in\Omega$. Moreover, together with the Wiener shift,
\begin{align*}
\theta_{t}\omega(s)=\omega(t+s)-\omega(t), \quad\omega\in\Omega, \ s,t\in\mathbb{R},
\end{align*}
we obtain, analogously as in Subsection \ref{subsec_rds}, the ergodic metric dynamical system $(\Omega,\mathcal{F},\mathbb{P},(\theta_{t})_{t\in\mathbb{R}})$.\\

In the following, $(\Omega,\mathcal{F},\mathbb{P})$ always denotes the probability space \eqref{mds}.

%
%
%
%

\section{Random attractors for nonautonomous random SPDEs}\label{ass:attr}

\subsection{Random dynamical system and absorbing set}
	Since we consider SPDEs with  time-dependent random differential operators, we need 
	to impose additional structural assumptions  
	in order to use the framework of random dynamical systems, see~\cite{CDLS,lns}. 
	\medskip
	
\textbf{Assumptions 3}
	
	\begin{itemize}
	\item[(RDS)]
	We assume that the generators depend on $t$ and $\omega$ in the following way: 
	\begin{align}\label{struc_ass}
	A(t,\omega)=A(\theta_{t}\omega)\quad \mbox{for all } t\in\R \mbox{ and }\omega\in\Omega.
	\end{align}
	\end{itemize}
	
This assumption is needed to obtain the cocycle 
	property.	
	In this case, the group property of the metric dynamical system implies hat 
	\begin{align*}
	A(\theta_{s}\theta_{t-s}\omega)=A(\theta_{t}\omega)\quad \mbox{for all  } t, s\in\R\mbox{ and } \omega\in\Omega.
	\end{align*}

Moreover, one can easily show that  $A(\theta_t\omega)$ generates a random dynamical system, i.e. the solution operator corresponding to the linear evolution equation
\begin{align*}
\txtd u(t)& = A(\theta_t\omega) u(t)~\txtd t\\
u(0)&=u_{0}\in X,
\end{align*}
forms a random dynamical system.
\medskip

From now on, we always assume that Assumptions 1, 2 and 3 hold and that the family of adjoint operators 
$A^*$ satisfies $(A3)$ with $\nu^*\in(0,1].$

\begin{theorem} Let $U:\Delta\times\Omega\to\mathcal{L}(X)$ be the evolution operator generated by $A(\theta_t\omega)$. Then, $\widetilde{U}:\mathbb{R}^{+}\times\Omega\times X\to X$ defined as
	\begin{align}
	\widetilde{U}(t,\omega):=U(t,0,\omega),\quad t\geq 0,
	\end{align}
	is a random dynamical system.
\end{theorem}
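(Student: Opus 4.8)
The plan is to verify the three defining properties of a random dynamical system (Definition 2.5) for $\widetilde{U}(t,\omega) = U(t,0,\omega)$, using the structural assumption (RDS), $A(t,\omega) = A(\theta_t\omega)$, together with the properties of the evolution family $U$ collected in Theorem~\ref{thm_generation}. The measurability requirement follows from part $5)$ of Theorem~\ref{thm_generation} (strong measurability of $U$ in the uniform operator topology) combined with the measurability of the metric dynamical system $(\theta_t)$; property (i), $\widetilde{U}(0,\omega) = \mathrm{Id}_X$, is immediate from part $1)$ of Theorem~\ref{thm_generation}; and property (iii), continuity of $x \mapsto \widetilde{U}(t,\omega)x$, follows because $U(t,0,\omega) \in \mathcal{L}(X)$ is a bounded linear operator, hence continuous, for each fixed $t,\omega$.

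The heart of the proof is the cocycle property (ii): $\widetilde{U}(t+\tau,\omega,x) = \widetilde{U}(t,\theta_\tau\omega, \widetilde{U}(\tau,\omega,x))$, i.e. $U(t+\tau,0,\omega) = U(t,0,\theta_\tau\omega)\,U(\tau,0,\omega)$ for all $t,\tau \geq 0$. The two-parameter evolution system already satisfies the evolution identity $U(t+\tau,0,\omega) = U(t+\tau,\tau,\omega)\,U(\tau,0,\omega)$ from part $2)$ of Theorem~\ref{thm_generation}, so it suffices to show the shift identity
\begin{align*}
U(t+\tau,\tau,\omega) = U(t,0,\theta_\tau\omega) \qquad \text{for all } t\geq 0,\ \tau\in\R,\ \omega\in\Omega.
\end{align*}
The idea is that both sides solve the same linear non-autonomous problem. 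By part $4)$ of Theorem~\ref{thm_generation}, $r \mapsto U(r,0,\theta_\tau\omega)$ satisfies $\frac{d}{dr}U(r,0,\theta_\tau\omega) = A(r,\theta_\tau\omega)U(r,0,\theta_\tau\omega) = A(\theta_r\theta_\tau\omega)U(r,0,\theta_\tau\omega) = A(\theta_{r+\tau}\omega)U(r,0,\theta_\tau\omega)$, using (RDS) and the group property $\theta_r\circ\theta_\tau = \theta_{r+\tau}$. On the other hand, setting $V(r) := U(r+\tau,\tau,\omega)$, part $4)$ gives $\frac{d}{dr}V(r) = A(r+\tau,\omega)U(r+\tau,\tau,\omega) = A(\theta_{r+\tau}\omega)V(r)$. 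Both $U(\cdot,0,\theta_\tau\omega)$ and $V(\cdot)$ equal $\mathrm{Id}_X$ at $r=0$ and satisfy the same linear evolution equation with generator family $r \mapsto A(\theta_{r+\tau}\omega)$; by the uniqueness part of Theorem~\ref{thm_generation} (uniqueness of the parabolic evolution system generated by a given family of operators) they coincide. Composing with $U(\tau,0,\omega)$ yields the cocycle identity.

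The main obstacle is making the uniqueness argument rigorous: Theorem~\ref{thm_generation} asserts uniqueness of \emph{the} evolution system associated to a family of operators, so one should phrase the argument as identifying $\{U(t+\tau,s+\tau,\omega)\}_{(s,t)\in\Delta}$ and $\{U(t,s,\theta_\tau\omega)\}_{(s,t)\in\Delta}$ as two parabolic evolution systems both generated by the family $\{A(\theta_{r+\tau}\omega)\}_r = \{A(r,\theta_\tau\omega)\}_r$, and then invoke uniqueness directly at the level of evolution systems rather than through a pointwise ODE comparison (which would require care about the domain $\mathcal{D}_A$ and differentiability only for $x\in\mathcal{D}_A$ and $s<t$). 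One must also check that the shifted family $\{A(r,\theta_\tau\omega)\}_r$ still satisfies Assumptions~1, which is clear since (RDS) makes the constants $M$, $C(\overline\omega)$, the sector $\Sigma_\vartheta$, and the common domain $\mathcal{D}_A$ all shift-invariant. Once the shift identity for $U$ is established, the remaining verifications are routine, and the theorem follows.
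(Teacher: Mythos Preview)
Your proposal is correct and follows essentially the same route as the paper: both use the evolution identity $U(t+\tau,0,\omega)=U(t+\tau,\tau,\omega)\,U(\tau,0,\omega)$ from Theorem~\ref{thm_generation} and then establish the shift identity $U(t+\tau,\tau,\omega)=U(t,0,\theta_\tau\omega)$, with measurability coming from part~5). The only difference is one of detail: the paper simply asserts the shift identity from the observation $A(\theta_t\omega)=A(\theta_s\theta_{t-s}\omega)$, whereas you justify it more carefully by recognizing both $\{U(t+\tau,s+\tau,\omega)\}$ and $\{U(t,s,\theta_\tau\omega)\}$ as parabolic evolution systems for the same shifted family $\{A(r,\theta_\tau\omega)\}_r$ and invoking the uniqueness in Theorem~\ref{thm_generation}.
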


\begin{proof}
	The cocycle property immediately follows from~\eqref{cozyklus}.	In fact, let $t,s\geq 0$. Then, ~\eqref{cozyklus} implies that
	\begin{align*}
	U(t+s,0,\omega)=U(t+s,s,\omega)U(s,0,\omega).
	\end{align*}
	Moreover, we observe that  $U(t+s,s,\omega)=U(t,0,\theta_{s}\omega)$ since $A(\theta_{t}\omega)=A(\theta_{s}\theta_{t-s}\omega)$.
	Intuitively, this means that starting at time $s$ on the $\omega$-fiber of the noise and letting time $t>0$ pass, leads to the same state as starting at time zero on the shifted $\theta_s$-fiber of the noise and letting the system evolve for time $t$.
	At the level of random generators, $U(t+s,s,\omega)$ is obtained from $A(\theta_t\omega)$ which is the same as $A(\theta_s \theta_{t-s}\omega)$ due to the properties of the metric dynamical system. Therefore, the cocycle property
	\begin{align}\label{ex:c}
	\widetilde{U}(t+s,\omega)=\widetilde{U}(t,\theta_{s}\omega) \widetilde{U}(s,\omega).
	\end{align}  is satisfied.
	The measurability of $\tilde{U}$ follows from Theorem \ref{thm_generation}, Property 5).
\end{proof}\\


\begin{remark}
We obtain the measurability of $\widetilde{U}$ directly from the results in \cite{PronkVeraar}.
Alternatively, one can show the measurability of $\tilde{U}$ as in~\cite[Lem.~14]{lns} using Yosida approximations of $A(\omega)$. Here, one argues that the evolution operators corresponding to these approximations are strongly measurable and then passes to the limit. These arguments exploit the structural assumption \eqref{struc_ass}.
The proof of the measurability in \cite{PronkVeraar} is more involved and holds under more general assumptions.
\end{remark}

We give a standard example of a random nonautonomous generator and its corresponding evolution operator.

\begin{example}\label{ex}
	A simple example for an operator that satisfies our assumptions is a random perturbation of a uniformly elliptic operator $A$ (in a smooth bounded domain with homogeneous Dirichlet boundary conditions) by a real-valued Ornstein-Uhlenbeck process, which is the stationary solution of the Langevin equation
	\begin{align*}
	\txtd z(t) = -\mu z(t)~dt + ~\txtd \overline{W}(t).
	\end{align*}
	Here, $\mu>0$ and $\overline{W}$ is a two-sided real-valued Brownian motion. We denote by $(\overline{\Omega},\overline{\mathcal{F}},\overline{\mathbb{P}})$ its associated canonical probability space and make the identification $\overline{W}(t,\overline{\omega}):=\overline{\omega}(t)$ for $\overline\omega\in\overline\Omega$.
	Then, we have that
	\begin{align*}
	z(\theta_{t}\overline\omega) =\int\limits_{-\infty}^{t} \text{e}^{-\mu(t-s)}~\txtd \overline{\omega}(s) =\int\limits_{-\infty}^{0} \text{e}^{\mu s}~\txtd\theta_{t}\overline\omega(s).
	\end{align*}
	In this case, the parabolic evolution operator generated by $A+z(\theta_t\overline\omega)$ is \begin{align*}
	\widetilde{U}(t,\omega) :=T(t) \text{e}^{\int\limits_{0}^{t}z(\theta_{\tau}\overline\omega)~\txtd\tau},
	\end{align*}
	where $(T(t))_{t\geq 0}$ is the analytic $C_{0}$-semigroup generated by $A$. We have 
	\begin{align*}
	\widetilde{U}(t,\overline\omega) =  \underbrace{T(t-s)\text{e}^{\int\limits_{s}^{t}z(\theta_{\tau}\overline\omega)~\txtd\tau }}_{U(t,s,\overline\omega)} \underbrace{T(s)\text{e}^{\int\limits_{0}^{s}z(\theta_{\tau}\overline\omega)~\txtd\tau}}_{U(s,0,\overline\omega)},
	\end{align*}
	and consequently, $\widetilde{U}(t-s,\theta_s\overline\omega)=T(t-s)\text{e}^{\int\limits_{0}^{t-s}z(\theta_{\tau+s}\overline\omega)} ~\txtd\tau$.\\
	
\end{example}

This simple example illustrates that the formalism we introduced above is meaningful.
Further examples of random time-dependent generators are provided in Section~\ref{examples}. For additional applications we refer to \cite{PronkVeraar}, and to 
 \cite{CDLS,lns} in the context of random dynamical systems.\\

We now prove the existence of random attractors for SPDEs of the form
\begin{align}\label{equation}
 \begin{cases}
\txtd u(t) = A(\theta_{t}\omega) u(t) ~\txtd t + F(u(t)) ~\txtd t + \sigma ~\txtd\omega(t)\\
u(0)=u_{0}
\end{cases}
\end{align}
using pathwise mild solutions as defined in~\eqref{sol:1}.

\begin{remark}
\begin{itemize}
\item 

	We emphasize that the SPDE~\eqref{equation} cannot be transformed into a PDE with random coefficients using the stationary Ornstein-Uhlenbeck process, since the convolution
	$$\int\limits_{0}^{t} U(t,s,\omega)~\txtd \omega(s) $$ is not defined
	and one has to make sense of it using the integration by parts formula
	\begin{equation}\label{ou}
	\omega(t) + \int\limits_{0}^{t} U(t,s,\omega)A(\theta_s\omega) \omega(s)~\txtd s = U(t,0,\omega)\omega(t) - \int\limits_{0}^{t} U(t,s,\omega)A(\theta_s\omega)(\omega(t) -\omega(s))~\txtd s.
	\end{equation}
A transformation based on a strictly stationary solution of~\eqref{problem} has been introduced in \cite{Gess2} using a different approach.
	\item 
	Another approach would be to subtract the noise, i.e.~to introduce the change of variables  $v:=u-\sigma \omega$. This would {\em formally} lead to the random PDE 
	\begin{align}\label{s_noise}
	\txtd v(t) &= A(\theta_{t}\omega) (v(t) +\sigma\omega(t)) ~\txtd t + F (v(t) +\omega(t) )~\txtd t\nonumber\\
	& = A(\theta_{t}\omega) v(t)~\txtd t + \sigma A(\theta_{t}\omega)\omega(t)~\txtd t +  F (v(t) +\omega(t) )~\txtd t.
	\end{align}
	The mild solution of~\eqref{s_noise} would be given by 
	$$v(t) =U(t,0,\omega) v_0 + \sigma\int\limits_{0}^{t} U(t,s,\omega)A(\theta_s\omega)\omega(s)~\txtd s + \int\limits_{0}^{t} U(t,s,\omega)F(v(s)+\sigma\omega(s))~\txtd s. 
	$$
	However, we would need to justify that this mild solution is well-defined, and the noise also interacts with the nonlinear term. 
	In order to avoid these difficulties we work with pathwise mild solutions.
\end{itemize}
\end{remark}

Using~\eqref{ou}, the   
 representation formula of a solution for~\eqref{equation} reads as 
\begin{align}
u(t)&= U(t,0)u_{0} + \sigma U(t,0)\omega(t) + \int\limits_{0}^{t}U(t,s)F(u(s))~\txtd s - \sigma \int\limits_{0}^{t} U(t,s)A(\theta_{s}\omega) (\omega(t) -\omega(s)) ~\txtd s\nonumber\\
& =  U(t,0)u_{0} + \sigma U(t,0)\omega(t) +\int\limits_{0}^{t}U(t,s)F(u(s))~\txtd s- \sigma \int\limits_{0}^{t} U(t,s)A(\theta_{s}\omega) \theta_{s}\omega(t-s) ~\txtd s\nonumber\\
&= \widetilde{U}(t,\omega)u_{0} + \sigma\widetilde{U}(t,\omega)\omega(t) + \int\limits_{0}^{t} \widetilde{U}(t-s,\theta_{s}\omega)F(u(s))~\txtd s\label{for:absorbing}\\
&\ \  -\sigma\int\limits_{0}^{t}\widetilde{U}(t-s,\theta_{s}\omega)A(\theta_{s}\omega) \theta_{s}\omega(t-s)~\txtd s\nonumber\\
& = \widetilde{U}(t,\omega)u_{0} +\int\limits_{0}^{t} \widetilde{U}(t-s,\theta_{s}\omega) F(u(s)) ~\txtd s +\sigma\omega(t) +\sigma\int\limits_{0}^{t} \widetilde{U}(t-s,\theta_{s}\omega)A(\theta_{s}\omega) \omega(s)~\txtd s.\label{cocycle:p}
\end{align}
Here, we used in the last line that
\begin{align*}
\int\limits_{0}^{t} U(t,s,\omega)A(s,\omega)~\txtd s= \int\limits_{0}^{t}\widetilde{U}(t-s,\theta_{s}\omega) A(\theta_{s}\omega)~\txtd s = -U(t,t,\omega) + U(t,0,\omega)=\widetilde{U}(t,\omega)- \mbox{Id},
\end{align*}
since
$$ \frac{\partial}{\partial s} U(t,s,\omega)=-U(t,s,\omega)A(s,\omega).$$

\begin{remark} We emphasize that the pathwise mild solution concept is applicable also under weaker assumptions on the noise, for instance if $\omega$ takes values in  a suitable extrapolation space~\cite[Section~3.1]{PortalVeraar}.
Moreover, the formal computations made in~\eqref{cocycle:p} can be justified even if $\omega\not\in\mathcal{D}_A$.

In fact, according to~\cite[Theorem 4.9]{PronkVeraar} we know that the pathwise mild solution is equivalent to the weak solution of~\eqref{equation}. For simplicity we test the linear part (i.e.~for $F\equiv 0$) of~\eqref{equation} with $x^{*}\in \mathcal{D}_{A^*}:= D((A^*(t)))$. This yields
	\begin{align}\label{weak}
	\left< u(t),x^{*}\right> = \left<U(t,0)u_0,x^{*}\right> +\sigma \left<U(t,0)\omega(t),x^{*}\right> - \sigma\int\limits_{0}^{t} \left< U(t,s) A(\theta_s\omega) (\omega(t)-\omega(s) ),x^{*}\right>~\txtd s,
	\end{align}
	where $\langle\cdot,\cdot\rangle$ denotes the dual pairing. Plugging the identity 
	\begin{align*}
	\int\limits_{0}^{t}\left<U(t,s) A(\theta_s\omega)\omega(t),x^{*}\right>~\txtd s = \left<U(t,0)\omega(t),x^{*}\right> -\left<\omega(t),x^{*}\right>,
	\end{align*}
	which holds for $\omega(\cdot)\in X$ (see \cite[Section~4.4]{PronkVeraar}),
	into ~\eqref{weak} entails
	\begin{align*}
	\left< u(t),x^{*}\right> = \left<U(t,0)u_0,x^{*}\right> + \sigma\left<\omega(t), x^{*} \right> +\sigma\int\limits_{0}^{t}\left<U(t,s)A(\theta_s\omega)\omega(s), x^{*}\right>~\txtd s.
	\end{align*}
\end{remark}

\begin{lemma} 
The solution operator corresponding to~\eqref{equation} generates a random dynamical system $\varphi:\mathbb{R}^{+}\times\Omega\times X \to X$.
\end{lemma}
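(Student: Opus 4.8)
The plan is to define the candidate cocycle by $\varphi(t,\omega,u_0):=u(t)$, where $u$ is the pathwise mild solution of \eqref{equation} given by \eqref{cocycle:p}, and then to verify the four defining properties of a continuous random dynamical system. For the well-posedness I would first observe that the purely noise-driven part
\[
z(t,\omega):=\sigma\,\omega(t)+\sigma\int_0^t\widetilde U(t-s,\theta_s\omega)A(\theta_s\omega)\,\omega(s)\,\txtd s
\]
is a well-defined, $t$-continuous element of $X$: since $\widetilde U(t-s,\theta_s\omega)A(\theta_s\omega)=U(t,s,\omega)A(s,\omega)$ and $\omega\in C_0(\R;X_\beta)$, the estimates of Lemma~\ref{lem_estimates} bound the integrand by $c\,(t-s)^{\beta-1}\mathrm e^{-\lambda(t-s)}\sup_{r\in[0,t]}\|\omega(r)\|_{X_\beta}$, whose singularity is integrable on $[0,t]$ (compare \cite{PronkVeraar}). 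With $z$ available, \eqref{cocycle:p} reads $u=\widetilde U(\cdot,\omega)u_0+z(\cdot,\omega)+\Phi_\omega(u)$ with $\Phi_\omega(u)(t):=\int_0^t\widetilde U(t-s,\theta_s\omega)F(u(s))\,\txtd s$; because $F$ is globally Lipschitz by $(\textnormal{Drift})$ and $\|\widetilde U(t-s,\theta_s\omega)\|_{\mathcal L(X)}\le c\,\mathrm e^{-\lambda(t-s)}$ by $(\textnormal{U})$, the contraction mapping principle in $C([0,T];X)$ (with an exponentially weighted norm, or locally in time together with the a~priori bound following from $\lambda-cC_F>0$) yields, for each fixed $\omega\in\Omega$ and $u_0\in X$, a unique solution $u(\,\cdot\,;\omega,u_0)\in C([0,\infty);X)$, which is exactly the pathwise mild solution of \cite{PronkVeraar}. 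Then $\varphi(0,\omega,\cdot)=\mathrm{Id}_X$ is immediate, since $\widetilde U(0,\omega)=\mathrm{Id}$ and $\omega(0)=0$.

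The cocycle property is the substantial step. Fix $\tau\ge0$, set $u:=\varphi(\,\cdot\,,\omega,u_0)$ and define $v(r):=u(r+\tau)$ for $r\ge0$; I would show that $v$ coincides with the pathwise mild solution of \eqref{equation} on the shifted fibre $\theta_\tau\omega$ started from $u(\tau)$, so that uniqueness forces $v(r)=\varphi(r,\theta_\tau\omega,\varphi(\tau,\omega,u_0))$ and evaluation at $r=t$ is the assertion. Concretely, one writes down \eqref{cocycle:p} for $u$ at time $r+\tau$, splits each integral as $\int_0^{r+\tau}=\int_0^\tau+\int_\tau^{r+\tau}$, and in the second part substitutes $s\mapsto s+\tau$, using the structural assumption $A(t,\omega)=A(\theta_t\omega)$ together with $\theta_s\theta_\tau=\theta_{s+\tau}$ and $\widetilde U(r-s,\theta_{s+\tau}\omega)=U(r+\tau,s+\tau,\omega)$. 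The contributions of $\int_0^\tau$ and the term $\widetilde U(r+\tau,\omega)u_0$ are then recognized, via the cocycle identity \eqref{ex:c} of $\widetilde U$ (equivalently $\widetilde U(r,\theta_\tau\omega)U(\tau,s,\omega)=U(r+\tau,s,\omega)$), as $\widetilde U(r,\theta_\tau\omega)$ applied to the representation \eqref{cocycle:p} of $u(\tau)$. What remains is to match the Wiener terms, and this reduces precisely to the $\theta_\tau\omega$-fibre version of the integration-by-parts identity behind \eqref{ou},
\[
\int_0^r\widetilde U(r-s,\theta_{s+\tau}\omega)A(\theta_{s+\tau}\omega)\,x\,\txtd s=\widetilde U(r,\theta_\tau\omega)x-x,
\]
evaluated at the fixed element $x=\omega(\tau)\in X_\beta$. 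I expect this last point to be the main obstacle: although $\omega(\tau)$ need not lie in $\mathcal D_A$, the identity is valid here by the argument recalled in the Remark preceding this lemma — test against $x^*\in\mathcal D_{A^*}$, use the scalar identity of \cite[Sec.~4.4]{PronkVeraar}, and pass back to $X$ using that $\mathcal D_{A^*}$ separates points of $X$ (recall $X$ is reflexive and $A^*$ densely defined) and that the $X$-valued integral converges by the same estimates that make \eqref{cocycle:p} meaningful.

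Finally I would check continuity in the initial datum and joint measurability. For $u_0,\bar u_0\in X$ the difference $w:=\varphi(\,\cdot\,,\omega,u_0)-\varphi(\,\cdot\,,\omega,\bar u_0)$ satisfies $w(t)=\widetilde U(t,\omega)(u_0-\bar u_0)+\int_0^t\widetilde U(t-s,\theta_s\omega)\bigl(F(u(s))-F(\bar u(s))\bigr)\,\txtd s$, whence $\|w(t)\|_X\le c\,\mathrm e^{-\lambda t}\|u_0-\bar u_0\|_X+cC_F\int_0^t\mathrm e^{-\lambda(t-s)}\|w(s)\|_X\,\txtd s$ and Gronwall's lemma gives $\|w(t)\|_X\le c\,\mathrm e^{(cC_F-\lambda)t}\|u_0-\bar u_0\|_X$; in view of $\lambda-cC_F>0$ this is a uniform (indeed exponentially contracting) Lipschitz estimate, which yields the continuity demanded in the definition and, incidentally, the bound needed later for $(H_4)$. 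Joint measurability of $(t,\omega,u_0)\mapsto\varphi(t,\omega,u_0)$ I would obtain from the Picard iterates $u^0\equiv u_0$, $u^{n+1}:=\widetilde U(\,\cdot\,,\omega)u_0+z(\,\cdot\,,\omega)+\Phi_\omega(u^n)$: each $u^n$ is jointly measurable because $\widetilde U$ is strongly measurable by Theorem~\ref{thm_generation}(5), the Wiener shift $(t,\omega)\mapsto\theta_t\omega$ is measurable, $F$ is continuous, and the Bochner integral preserves measurability; since the iteration converges uniformly on compact $t$-intervals, the limit $\varphi$ is measurable as well. Together with the cocycle property, this shows that $\varphi$ is a continuous random dynamical system on $X$ over $(\Omega,\mathcal F,\mathbb P,(\theta_t)_{t\in\R})$.
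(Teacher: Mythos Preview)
Your proposal is correct and follows essentially the same route as the paper. Both proofs hinge on the cocycle identity \eqref{ex:c} for $\widetilde U$, the splitting $\int_0^{r+\tau}=\int_0^\tau+\int_\tau^{r+\tau}$, and the integration-by-parts identity $\int_0^r\widetilde U(r-s,\theta_{s+\tau}\omega)A(\theta_{s+\tau}\omega)\,x\,\txtd s=(\widetilde U(r,\theta_\tau\omega)-\mathrm{Id})x$ applied to $x=\omega(\tau)$; the paper carries this out as a direct algebraic verification of $\varphi(t+s,\omega,u_0)=\varphi(t,\theta_s\omega,\varphi(s,\omega,u_0))$, whereas you phrase it as ``the shifted function solves the shifted equation, hence equals the unique solution'', which is the same computation packaged differently. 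Your treatment is in fact more complete: the paper dismisses well-posedness, continuity and measurability in one sentence, while you spell out the contraction argument, the Gronwall Lipschitz estimate (which the paper only derives later when verifying $(H_4)$), and the Picard-iterate route to joint measurability. You also make explicit the justification of the key identity for $\omega(\tau)\notin\mathcal D_A$ via duality, which the paper handles only in the preceding Remark.
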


\begin{proof}
We only verify the cocycle property. The continuity is straightforward and the measurability of $\varphi$ follows from the measurability of $\tilde{U}$.\\

Let $s,t\geq 0$.
Using~\eqref{cocycle:p}, we have
\begin{align*}
&\ \varphi(t+s,\omega,u_{0}) \\
=&\ \widetilde{U}(t+s,\omega)u_{0}  + \int\limits_{0}^{t+s} \widetilde{U}(t+s-r,\theta_{r}\omega) F(u(r)) 
~\txtd r+\sigma \omega(t+s) \\
&\ +\sigma \int\limits_{0}^{t+s} \widetilde{U}(t+s-r,\theta_{r}\omega) A(\theta_r\omega) \omega(r) ~\txtd r\\
 = &\ \widetilde{U}(t,\theta_{s}\omega) \widetilde{U}(s,\omega)u_{0} + \widetilde{U}(t,\theta_{s}\omega)\int\limits_{0}^{s}\widetilde{U}(s-r,\theta_{r}\omega) F(u(r))~\txtd r\\
&\  + \int\limits_{s}^{s+t} \widetilde{U}(t+s-r,\theta_{r}\omega) F(u(r)) dr+\sigma \omega(t+s)\\
&\  +\sigma\widetilde{U}(t,\theta_{s}\omega) \int\limits_{0}^{s} \widetilde{U}(s-r,\theta_{r}\omega) A(\theta_{r}\omega)\omega(r)~\txtd r 
+\sigma \int\limits_{s}^{s+t} \widetilde{U}(t+s-r,\theta_{r}\omega) A(\theta_{r}\omega)\omega(r)~\txtd r\\
 =&\  \widetilde{U}(t,\theta_{s}\omega) \Bigg[\widetilde{U}(s,\omega)u_{0} +\int\limits_{0}^{s}\widetilde{U}(s-r,\theta_{r}\omega)F(u(r))~\txtd r 
+\sigma\int\limits_{0}^{s} \widetilde{U}(s-r,\theta_{r}\omega) A(\theta_{r}\omega) \omega(r)~\txtd r  \Bigg] \\
&\  + \int\limits_{s}^{s+t} \widetilde{U}(t+s-r,\theta_{r}\omega) F(u(r)) ~\txtd r +\sigma\omega(t+s)
+\sigma\int\limits_{s}^{s+t} \widetilde{U}(t+s-r,\theta_{r}\omega) A(\theta_{r}\omega)\omega(r)~\txtd r.
\end{align*} 
Using that
\begin{align*}
&\ \int\limits_{s}^{s+t} \widetilde{U}(t+s-r,\theta_{r}\omega) A(\theta_{r}\omega)\omega(r)~\txtd r= \int\limits_{0}^{t}\widetilde{U}(t-r,\theta_{s+r}\omega)A(\theta_{s+r}\omega) \omega(r+s)~\txtd r\\
 =&\  \int\limits_{0}^{t} \widetilde{U}(t-r,\theta_{s+r}\omega)A(\theta_{s+r}\omega) (\theta_{s}\omega(r)+ \omega(s) )~\txtd r \\
=&\ \int\limits_{0}^{t} \widetilde{U}(t-r,\theta_{s+r}\omega)A(\theta_{s+r}\omega) \theta_{s}\omega(r)~\txtd r + \omega(s)(-\mbox{Id} + \widetilde{U}(t,\theta_{s}\omega)),
\end{align*}
one immediately gets
\begin{align*}
&\ \varphi(t+s,\omega,u_{0})\\
=&\  \widetilde{U}(t,\theta_{s}\omega) \Bigg[\widetilde{U}(s,\omega)u_{0} 
+\sigma\omega(s) +\int\limits_{0}^{s}\widetilde{U}(s-r,\theta_{r}\omega)F(u(r))~\txtd r+\sigma\int\limits_{0}^{s} \widetilde{U}(s-r,\theta_{r}\omega)A(\theta_{r}\omega) \omega(r)~\txtd r  \Bigg]\\
&\  +\int\limits_{0}^{t} \widetilde{U}(t-r,\theta_{s+r}\omega) F(u(r+s)) ~\txtd r
 + \sigma\int\limits_{0}^{t}\widetilde{U}(t-r,\theta_{s+r}\omega)A(\theta_{r+s}\omega)\theta_{s}\omega(r)~\txtd r +\sigma \theta_{s}\omega(t)\\
=& \  \varphi(t,\theta_{s}\omega,\varphi(s,\omega,u_{0})).
\end{align*}
This proves the statement.
	\qed
	\end{proof}\\

Next, we show the existence of an absorbing set. We recall that  $\lambda>cC_{F}$ as assumed in (Drift). Here $\lambda, c$ and $C_F$ are the constants in \eqref{omega:constant} and \eqref{constants:F}.

From now on, the properties and statements hold
for all $\omega\in\Omega_0$ where $\Omega_0\subset\Omega$ is the set of all $\omega$ that have subexponential growth. The set
$\Omega_0$ is $(\theta_t)_{t\in\R}$-invariant and has full measure, see, e.g. \cite{BessaihGarridoSchmalfuss}. 
To simplify notations, in the sequel, we will denote $\Omega_0$ again by $\Omega.$

\begin{lemma}\label{lem_abs}
 The random dynamical system $\varphi$ has a pullback absorbing set.
\end{lemma}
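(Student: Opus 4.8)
The plan is to estimate the norm of the pathwise mild solution using the representation formula~\eqref{cocycle:p}, pulled back along the Wiener shift. Specifically, I would replace $\omega$ by $\theta_{-t}\omega$ in~\eqref{cocycle:p} and bound $\|\varphi(t,\theta_{-t}\omega,u_0)\|_X$ term by term, aiming to verify the criterion~\eqref{criterion}, namely $\limsup_{t\to\infty}\|\varphi(t,\theta_{-t}\omega,u_0)\|_X\leq\rho(\omega)$ for some positive random variable $\rho$, with a bound that is uniform over $u_0$ in a tempered set. Once this is established, the remark following~\eqref{criterion} gives the absorbing ball $B(\omega):=B(0,\rho(\omega)+\delta)$ for any fixed $\delta>0$.

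The key steps are as follows. First, the linear term $\widetilde U(t,\theta_{-t}\omega)u_0$ is controlled by $(U)$, i.e. $\|\widetilde U(t,\theta_{-t}\omega)u_0\|_X\leq c\,\text{e}^{-\lambda t}\|u_0\|_X$; since $u_0\in D(\theta_{-t}\omega)$ with $D$ tempered, $\text{e}^{-\lambda t}\|u_0\|_X\to 0$, so this term vanishes as $t\to\infty$. Second, the nonlinear convolution $\int_0^t\widetilde U(t-r,\theta_{r-t}\omega)F(u(r))\,\txtd r$ is handled by applying $(U)$ together with the linear growth bound~\eqref{constants:F}; writing $g(t):=\|\varphi(t,\theta_{-t}\omega,u_0)\|_X$ (or working directly with $\|u(t)\|_X$), one arrives at an integral inequality of the form $g(t)\leq c\text{e}^{-\lambda t}\|u_0\|_X + (\text{noise terms}) + cC_F\int_0^t\text{e}^{-\lambda(t-r)}g(r)\,\txtd r$, and a Gronwall-type argument using $\lambda-cC_F>0$ yields exponential-in-time decay of the homogeneous part plus a bounded contribution from the inhomogeneity. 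Third, the two noise terms — $\sigma\,\theta_{-t}\omega(t)=\sigma(\omega(0)-\omega(-t))=-\sigma\,\omega(-t)$ and $\sigma\int_0^t\widetilde U(t-r,\theta_{r-t}\omega)A(\theta_{r-t}\omega)\theta_{-t}\omega(r)\,\txtd r$ — must be estimated; here I would use Lemma~\ref{lem_estimates} with a fractional power to absorb the unbounded operator $A$, exploiting that $\omega$ (hence $\theta_{-t}\omega(r)$) takes values in $X_\beta$, so that $\|\widetilde U(t-r,\cdot)A(\cdot)x\|_X=\|\widetilde U(t-r,\cdot)(-A)^{1-\beta}(-A)^{\beta}x\|_X\lesssim \text{e}^{-\lambda(t-r)}(t-r)^{-(1-\beta)}\|x\|_{X_\beta}$, which is integrable in $r$ near $r=t$. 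The subexponential growth of $\omega$ on $\Omega_0$ guarantees these integrals converge and produce a finite random bound $\rho(\omega)$ depending only on $\omega$ (through $\sup_{r\le 0}\text{e}^{\lambda r/2}\|\omega(r)\|_{X_\beta}$-type quantities) and not on $u_0$.

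The main obstacle is the noise convolution term with the unbounded operator $A(\theta_{r-t}\omega)$ inside: one must simultaneously (i) make sense of the integrand via the fractional-power smoothing estimate of Lemma~\ref{lem_estimates}, checking that $\beta>0$ makes the singularity $(t-r)^{-(1-\beta)}$ integrable, and (ii) bound the time-shifted Wiener path $\|\theta_{-t}\omega(r)\|_{X_\beta}=\|\omega(r-t)-\omega(-t)\|_{X_\beta}$ over $r\in[0,t]$ by something of the form $C(\omega)\,\text{e}^{\epsilon(t-r)}$ using subexponential growth, then verify that the resulting integral $\int_0^t\text{e}^{-\lambda(t-r)}(t-r)^{-(1-\beta)}\|\theta_{-t}\omega(r)\|_{X_\beta}\,\txtd r$ stays bounded as $t\to\infty$, which works because $\lambda>0$ dominates the subexponential factor. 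The Gronwall step and the tempered-set argument for $u_0$ are routine; assembling the pieces gives $\rho(\omega)$.
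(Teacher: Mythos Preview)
Your overall strategy (pull back along $\theta_{-t}$, estimate term by term, apply Gronwall, invoke criterion~\eqref{criterion}) matches the paper's, but there is a genuine gap in the noise estimate caused by your choice of representation. You work with~\eqref{cocycle:p}, whose pulled-back noise part is
\[
-\sigma\,\omega(-t)\;+\;\sigma\int_0^t \widetilde U(t-r,\theta_{r-t}\omega)A(\theta_{r-t}\omega)\,\theta_{-t}\omega(r)\,\txtd r,
\]
and you propose to bound these two pieces \emph{separately}. That fails: the first piece $\|\omega(-t)\|_{X_\beta}$ is unbounded as $t\to\infty$ (subexponential growth is still growth), so it cannot contribute to a finite $\rho(\omega)$. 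Your suggested control $\|\theta_{-t}\omega(r)\|_{X_\beta}\le C(\omega)\text{e}^{\epsilon(t-r)}$ for the second piece is also wrong: at $r=t$ the left side equals $\|\omega(-t)\|_{X_\beta}$, which grows in $t$, while the right side equals $C(\omega)$. The two noise pieces in~\eqref{cocycle:p} do combine to something bounded, but only through an exact cancellation (the identity $\int_0^t U(t,s)A(s)\,\txtd s=\widetilde U(t,\omega)-\text{Id}$ applied to the constant $\omega(-t)$); taking norms separately destroys it.

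The fix is precisely why the paper says ``it is convenient to work with the representation formula~\eqref{for:absorbing}'': there the pulled-back noise terms are $\sigma\,\widetilde U(t,\theta_{-t}\omega)\,\omega(-t)$, carrying the factor $\text{e}^{-\lambda t}$ that kills the growth of $\|\omega(-t)\|_{X_\beta}$, and a convolution whose integrand involves $\theta_{s-t}\omega(t-s)=-\omega(s-t)$, so after the fractional-power estimate and the substitution $r=s-t$ one gets $\int_{-t}^{0}\text{e}^{\lambda r}(-r)^{-(1-\beta)}\|\omega(r)\|_{X_\beta}\,\txtd r$, which is finite by subexponential growth. With this representation your Gronwall step and the remainder of the argument go through exactly as you outlined.
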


\begin{proof}
	We verify~\eqref{criterion}. To this end we use the estimates in Lemma \ref{lem_estimates}
	and Gronwall's Lemma. 
We observe that $\omega\in X_\beta$ implies that 
	\begin{align*}
	\|\widetilde{U}(t,\omega)\omega(t)\|_X= 
	\|\widetilde{U}(t,\omega)(-A(\omega))^{-\beta} (-A(\omega))^{\beta}\omega(t)\|_X \leq  
	\hat c \text{e}^{-\lambda t} \|\omega(t)\|_{X_{\beta}},
	\end{align*}	
	for some constant $\hat c>0.$	\\
	
	It is convenient to work with the representation formula~\eqref{for:absorbing}. Assuming that the fibre is given for $\theta_{-t}\omega$
we obtain 
\begin{align*}
\|u(t)\|_X\leq& \|U(t,0)u_{0}\|_X + \sigma \|U(t,0)\theta_{-t}\omega(t)\|_X +\int\limits_{0}^{t}\|U(t,s)F(u(s))\|_X~\txtd s\\
&+ \sigma \int\limits_{0}^{t} \|U(t,s)A(\theta_{s-t}\omega) \theta_{s-t}\omega(t-s) \|_X~\txtd s\\
\leq& c \text{e}^{-\lambda t} \|u_{0}\|_X + \sigma \hat c \text{e}^{-\lambda t}\|\omega(-t)\|_{X^\beta} +\int\limits_{0}^{t}\|U(t,s)F(u(s))\|_X~\txtd s\\
&+ \sigma \int\limits_{0}^{t} \|\widetilde{U}(t-s,\theta_{s-t}\omega)A(\theta_{s-t}\omega) \omega(s-t) \|_X~\txtd s.
\end{align*}
	For the nonlinear term the Lipschitz continuity and \eqref{omega:constant} yield
	\begin{align*}
	\Bigg\| \int\limits_{0}^{t} \widetilde{U}(t-s,\theta_{s}\omega)F(u(s))~\txtd s \Bigg\|_X\leq
	c\int\limits_{0}^{t} \text{e}^{-\lambda (t-s)} (\overline{C}_{F} + C_{F}\|u(s)\|_X)~\txtd s,
	\end{align*}
	and the generalized stochastic convolution results in 
	\begin{align*}
	&\int\limits_{0}^{t}\|\widetilde{U}(t-s,\theta_{s-t}\omega)A(\theta_{s-t}\omega) \omega(s-t)\|_X~\txtd s \\
	= &
	\int\limits_{0}^{t} \|\widetilde{U}(t-s,\theta_{s-t}\omega)(-A(\theta_{s-t}\omega))^{1-\beta}\|_{\mathcal{L}(X)}  \|(-A(\theta_{s-t}\omega))^\beta\omega(s-t)\|_X~\txtd s\\ 
\leq &\widetilde{C}_{1-\beta}\int\limits_{0}^{t}\frac{\text{e}^{-\lambda(t-s)}}{(t-s)^{1-\beta}}\|\omega(s-t)\|_{X_{\beta}}~\txtd s,
	\end{align*}	
	where $\widetilde{C}_{1-\beta}$ denotes the constant in Lemma \ref{lem_estimates}.
	Hence, combining the estimates we obtain
	\begin{align*}
	\|u(t)\|_X \leq &c \text{e}^{-\lambda t} \|u_{0}\|_X + \sigma \hat c \text{e}^{-\lambda t}
	\|\omega(-t)\|_{X_{\beta}} +c\int\limits_{0}^{t} \text{e}^{-\lambda(t-s)}(\overline{C}_{F} +C_{F}\|u(s)\|_X )~\txtd s\\
	& + \sigma \widetilde{C}_{1-\beta}\int\limits_{0}^{t} \frac{\text{e}^{-\lambda(t-s)}}{{(t-s)^{1-\beta}}} \|\omega(s-t)\|_{X_{\beta}}~\txtd s.
	\end{align*}
	Setting 
	\begin{align*}
	\gamma(t):=&c \text{e}^{-\lambda t}\|u_{0}\|_X +\sigma \hat c\text{e}^{-\lambda t}\|\omega(-t)\|_{X_{\beta}}+ 
	c\overline{C}_{F}\int\limits_{0}^{t} \text{e}^{-\lambda(t-s)} ~\txtd s + \sigma
	\widetilde{C}_{1-\beta}\int\limits_{0}^{t} \frac{\text{e}^{-\lambda (t-s) }}{(t-s)^{1-\beta}}\|\omega(s-t)\|_{X_{\beta}}~\txtd s\\
	=&c \text{e}^{-\lambda t}\|u_{0}\|_X +\sigma \hat c\text{e}^{-\lambda t}\|\omega(-t)\|_{X_{\beta}}+ 
	\frac{c\overline{C}_{F}}{\lambda} + \sigma
	\widetilde{C}_{1-\beta}\int\limits_{-t}^{0} \frac{\text{e}^{\lambda r }}{(-r)^{1-\beta}}\|\omega(r)\|_{X_{\beta}}~\txtd s,
	\end{align*}
	we can rewrite the previous inequality as
	\begin{align}\label{est:cf}
	\|u(t)\|_X \leq \gamma(t) +c C_{F} \int\limits_{0}^{t}\text{e}^{-\lambda(t-s)} \|u(s)\|_X~\txtd s.
	\end{align}
	Applying Gronwall's lemma to $\text{e}^{\lambda t}\|u(t)\|_X$ we infer that
	\begin{align*}
	\text{e}^{\lambda t }\|u(t)\|_X \leq \text{e}^{\lambda t } \gamma(t) +cC_{F} \int\limits_{0}^{t}\text{e}^{\lambda s}\gamma(s)  \text{e}^{cC_{F}(t-s)}~\txtd s,
	\end{align*}
	and multiplying with $\text{e}^{-\lambda t}$ we obtain
	\begin{align}\label{estimate:absorbing}
	\|u(t)\|_X \leq \gamma(t) +cC_{F} \int\limits_{0}^{t} \text{e}^{-(\lambda-cC_{F})(t-s)} \gamma(s) ~\txtd s.
	\end{align}
	This estimate allows us to determine the pullback absorbing set. 	 
	First, note that all terms in $\gamma$ are well-defined for the limit $t\to\infty$, due to the subexponential growth of $\|\omega(t)\|_{X_{\beta}}$, and consequently, 
	\begin{align}\label{first:contribution}
	\gamma(t) \leq e ^{-\lambda t} \Bigg(c \|u_{0}\|_X +\hat c \sigma \|\omega(-t)\|_{X_{\beta}} \Bigg) + \frac{c\overline{C}_{F}}{\lambda}
	+\sigma \widetilde{C}_{1-\beta}\int\limits_{-\infty}^{0} \frac{\text{e}^{\lambda r}}{(-r)^{1-\beta}} \|\omega(r)\|_{X_{\beta}}\txtd r<\infty.
	\end{align}		
	We now focus on the second term in~\eqref{estimate:absorbing},
	\begin{align*}
	\int\limits_{0}^{t} \text{e}^{-(\lambda-cC_{F})(t-s)}\gamma(s)~\txtd s\leq \int\limits_{0}^{t} \text{e}^{-(\lambda-cC_{F})(t-s)} \Bigg(& c\text{e}^{-\lambda s}\|u_{0}\|_X 
	+\hat c\sigma \text{e}^{-\lambda s}  \|\omega(-s)\|_{X_{\beta}}  + \frac{c\overline{C}_{F}}{\lambda}\\
	& + \sigma \widetilde{C}_{1-\beta} \int\limits_{-s}^{0} \frac{\text{e}^{\lambda r}}{(-r)^{1-\beta}} \|\omega(r)\|_{X_{\beta}} \txtd r  \Bigg) ~\txtd s.
	\end{align*}
	The first and the third term are bounded by
	\begin{align*}
	\int\limits_{0}^{t} \text{e}^{-(\lambda-cC_{F})(t-s)} \text{e}^{-\lambda s}\|u_{0}\|_X ~\txtd s\leq  \frac{\text{e}^{-(\lambda-cC_{F})t}}{cC_{F}} \|u_{0}\|_X
	\end{align*}
	and obviously
	\begin{align*}
	\frac{c\overline{C}_{F}}{\lambda}\int\limits_{0}^{t} \text{e}^{-(\lambda-cC_{F})(t-s)}~\txtd s \leq \frac{c\overline{C}_{F}}{\lambda (\lambda - cC_{F})}.
	\end{align*}
	The second one can be estimated by
	\begin{align*}
	\sigma\hat c\text{e}^{-(\lambda-cC_{F})t}\int\limits_{0}^{t} \text{e}^{-cC_{F}s}  \|\omega(-s)\|_{X_{\beta}}~\txtd s= \sigma\hat c \text{e}^{-(\lambda-cC_{F})t}\int\limits_{-t}^{0} \text{e}^{cC_{F}s}  \|\omega(s)\|_{X_{\beta}}~\txtd s.
	\end{align*}	
	Finally, for the last term, we observe that
	\begin{align*}
	&\sigma \widetilde{C}_{1-\beta}\int\limits_{0}^{t} \text{e}^{-(\lambda -cC_{F})(t-s)} \int\limits_{-s}^{0} \frac{\text{e}^{\lambda r}}{(-r)^{1-\beta}} \|\omega(r)\|_{X_{\beta}}\txtd r~\txtd s\\
	\leq &\sigma \widetilde{C}_{1-\beta}\int\limits_{0}^{t} \text{e}^{-(\lambda -cC_{F})(t-s)} ~\txtd s\int\limits_{-\infty}^{0} \frac{\text{e}^{\lambda r}}{(-r)^{1-\beta}} \|\omega(r)\|_{X_{\beta}}\txtd r=\frac{\sigma \widetilde{C}_{1-\beta}}{\lambda-cC_F}\int\limits_{-\infty}^{0} \frac{\text{e}^{\lambda r}}{(-r)^{1-\beta}} \|\omega(r)\|_{X_{\beta}}\txtd r.
	\end{align*}	
	In conclusion, using all the previous estimates in~\eqref{estimate:absorbing} we have
	\begin{align}\label{est_abs_time}
	\begin{split}
	\|u(t)\|_X  \leq& e ^{-\lambda t} \Bigg(c \|u_{0}\|_X + \sigma \hat c \|\omega(-t)\|_{X_{\beta}} \Bigg) + \frac{c\overline{C}_{F}}{\lambda}
	+\sigma \widetilde{C}_{1-\beta}\int\limits_{-\infty}^{0} \frac{\text{e}^{\lambda r}}{(-r)^{1-\beta}} \|\omega(r)\|_{X_{\beta}}\txtd r\\
	& + c\text{e}^{-(\lambda-cC_{F})t} \|u_{0}\|_X + \frac{c^2C_{F} \overline{C}_{F}} {\lambda(\lambda -cC_{F})} + cC_{F}\sigma\hat c \text{e}^{-(\lambda-cC_{F})t}\int\limits_{-t}^{0} \text{e}^{cC_{F}s}\|\omega(s)\|_{X_{\beta}}~\txtd s\\
	& + \frac{cC_{F}{\sigma}\widetilde{C}_{1-\beta}}{\lambda-cC_{F}} \int\limits_{-\infty}^{0}\frac{\text{e}^{\lambda s}}{(-s)^{1-\beta}}\|\omega(s)\|_{X_{\beta}}~\txtd s\\
	\leq &e ^{-(\lambda-cC_{F})t} \Bigg(2c \|u_{0}\|_X + \sigma \hat c \|\omega(-t)\|_{X_{\beta}} \Bigg) +\frac{c \overline{C}_F}{\lambda - cC_{F}}\\
	&   + cC_{F}\sigma\hat c \int\limits_{-\infty}^{0} \text{e}^{cC_{F}s}\|\omega(s)\|_{X_{\beta}}~\txtd s
 + \frac{\sigma\widetilde{C}_{1-\beta}\lambda}{\lambda-cC_{F}} \int\limits_{-\infty}^{0}\frac{\text{e}^{\lambda s}}{(-s)^{1-\beta}}\|\omega(s)\|_{X_{\beta}}~\txtd s.
\end{split}
	\end{align}
	Using~\eqref{criterion} we infer that 
	$B(\omega):=B(0,\rho(\omega)+\delta)$ for some $\delta>0$, where
	\begin{align*}
	\rho(\omega):=  \frac{c \overline{C}_F}{\lambda - cC_{F}} 
	+ cC_{F}\sigma\hat c \int\limits_{-\infty}^{0} \text{e}^{cC_{F}s}\|\omega(s)\|_{X_{\beta}}~\txtd s+
	 \frac{\sigma\widetilde{C}_{1-\beta}\lambda}{\lambda-cC_{F}} 
	\int\limits_{-\infty}^{0} \frac{\text{e}^{\lambda s}}{(-s)^{1-\beta}}\|\omega(s)\|_{X_{\beta}}~\txtd s
	\end{align*}
	is a pullback absorbing set for our random dynamical system.
	 This expression is natural, since we can 
	immediately see the influence of the linear part, nonlinear term and noise intensity. The previous integrals are well-defined due to the sub-exponential growth of $\omega$. More precisely, the set of all $\omega\in\Omega$ that have sub-exponential growth is invariant and has full measure.
	The temperedness of the absorbing set can be verified as in~\cite[Lem. 3.7]{BessaihGarridoSchmalfuss}.
	\qed
	\end{proof}

\subsection{Existence and finite fractal dimension of random attractors}

We now apply Theorem \ref{thm_rexpa} to deduce the existence of nonautonomous random exponential attractors for the random dynamical system $\varphi$.

\begin{theorem}\label{thm_expattr}
	For every $\nu\in(0,\frac{1}{2})$ and $\eta\in(0,1)$ the random dynamical system $\varphi$ generated by~\eqref{equation} has a nonautonomous
	random pullback exponential attractor $\mathcal{M}^{\nu,\eta}$, and its fractal dimension is bounded by 
	$$
	\sup_{t\in\R}\textnormal{dim}_f(\mathcal{M}^{\nu,\eta}(t,\omega))\leq \log_{\frac{1}{2\nu}}\left(N_{\frac{\nu}{\kappa}}^X(B^{X_\eta}(0,1))\right),
	$$
	where  
	$$
	\kappa=\widetilde{C}_\eta +C_F\widetilde{C}_\eta c\text{e}^{\frac{cC_F}{\lambda}}\int_0^{\tilde t}
	\frac{\text{e}^{-\lambda(\tilde t-s)}}{(\tilde t-s)^\eta}~\txtd s,
	$$
	and $\tilde t>0$ is arbitrary.
\end{theorem}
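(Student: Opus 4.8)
The plan is to obtain the theorem as a direct application of Theorem~\ref{thm_rexpa} with the auxiliary space $Y:=X_\eta$; once the four hypotheses $(H_1)$--$(H_4)$ are checked with $\omega$-independent constants, the asserted dimension bound is exactly the one produced there. I would dispose of $(H_1)$ immediately: by Assumption~(AC) the embedding $X_\eta\hookrightarrow X$ is compact, it is dense because $\mathcal D_A\subset X_\eta$ is dense in $X$ by~(A0), and $X_\eta$ is separable since $(-A)^{-\eta}$ is an isometric isomorphism from the separable space $X$ onto it. For $(H_2)$ I would invoke Lemma~\ref{lem_abs}, which furnishes the tempered random absorbing ball $B(\omega)=B(0,\rho(\omega)+\delta)$; the additional monotonicity $T_{D,\theta_{-t}\omega}\le T_{D,\omega}$ of the absorbing times can be read off from the explicit time dependence in~\eqref{est_abs_time}, exactly as in~\cite{BessaihGarridoSchmalfuss,CaSo}.

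The substance lies in $(H_3)$ and $(H_4)$, and the crucial point is that in the pathwise mild representation~\eqref{cocycle:p} the two $\omega$-dependent contributions $\sigma\omega(t)$ and $\sigma\int_0^t\widetilde{U}(t-s,\theta_s\omega)A(\theta_s\omega)\omega(s)\,\txtd s$ do not involve the initial datum. Hence, for $u_0,v_0\in B(\omega)$, writing $u(t)=\varphi(t,\omega,u_0)$ and $v(t)=\varphi(t,\omega,v_0)$, these terms cancel in the difference and
\[
u(t)-v(t)=\widetilde{U}(t,\omega)(u_0-v_0)+\int_0^t\widetilde{U}(t-s,\theta_s\omega)\bigl(F(u(s))-F(v(s))\bigr)\,\txtd s .
\]
This is precisely why the pathwise mild formulation is the right tool here: with the classical It\^o convolution the smoothing estimate in $X_\eta$ would involve a stochastic integral and the scheme would break down.

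For $(H_4)$, I would combine~(U) with the Lipschitz bound on $F$ from~(Drift) to get
\[
\|u(t)-v(t)\|_X\le c\,\text{e}^{-\lambda t}\|u_0-v_0\|_X+cC_F\int_0^t\text{e}^{-\lambda(t-s)}\|u(s)-v(s)\|_X\,\txtd s ,
\]
and then apply Gronwall's lemma to $\text{e}^{\lambda t}\|u(t)-v(t)\|_X$, which yields $\|u(t)-v(t)\|_X\le c\,\text{e}^{-(\lambda-cC_F)t}\|u_0-v_0\|_X$; since $\lambda-cC_F>0$ this is $\le c\|u_0-v_0\|_X$ on $[0,\tilde t]$, so $(H_4)$ holds with $L_\varphi=c$. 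For $(H_3)$, I would fix $\tilde t$ as allowed by Theorem~\ref{thm_rexpa} (in particular $\tilde t>T_{B,\omega}$ and large enough that $\text{e}^{-\lambda\tilde t}\tilde t^{-\eta}\le 1$), apply $(-A(\theta_{\tilde t}\omega))^\eta$ to the difference formula at time $\tilde t$, rewrite $\widetilde{U}(\tilde t-s,\theta_s\omega)=U(\tilde t,s,\omega)$, and use the first estimate of Lemma~\ref{lem_estimates} with $\alpha=\eta\in(0,1)$ on each factor, inserting the $X$-bound from $(H_4)$ into the Duhamel term. Since $\eta<1$ the kernel $(\tilde t-s)^{-\eta}\text{e}^{-\lambda(\tilde t-s)}$ is integrable on $(0,\tilde t)$, and collecting all constants produces $\|u(\tilde t)-v(\tilde t)\|_{X_\eta}\le\kappa\,\|u_0-v_0\|_X$ with $\kappa$ exactly as stated. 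Theorem~\ref{thm_rexpa} then yields, for every $\nu\in(0,\tfrac12)$, the nonautonomous random pullback exponential attractor $\mathcal M^{\nu,\eta}$ and the claimed bound on $\sup_{t\in\R}\text{dim}_f(\mathcal M^{\nu,\eta}(t,\omega))$.

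The main obstacle I expect is not any individual estimate but ensuring that $\kappa$ and $L_\varphi$ are genuinely independent of $\omega$, which is what permits the use of the simplified Theorem~\ref{thm_rexpa}; this relies on the Kato--Tanabe assumptions~(A2)--(A3), and hence all constants in Lemma~\ref{lem_estimates}, being uniform in $\overline\omega$. Given that, the cancellation of the noise terms, the singular-kernel bookkeeping in $(H_3)$, and the monotonicity of absorbing times in $(H_2)$ are all routine.
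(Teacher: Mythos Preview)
Your approach is essentially the paper's: verify $(H_1)$--$(H_4)$ of Theorem~\ref{thm_rexpa} with $Y=X_\eta$, exploiting that the noise terms in the pathwise mild representation cancel in differences; the paper proceeds identically. One small discrepancy: in $(H_4)$ you apply Gronwall to $\text{e}^{\lambda t}\|u(t)-v(t)\|_X$ and obtain the sharper bound $\|u(t)-v(t)\|_X\le c\,\text{e}^{-(\lambda-cC_F)t}\|u_0-v_0\|_X$, whence $L_\varphi=c$, whereas the paper drops the exponential first and obtains $L_\varphi=c\,\text{e}^{cC_F/\lambda}$. Consequently, when you insert your $X$-bound into the Duhamel term in $(H_3)$ you produce $\kappa_{\text{yours}}=\widetilde C_\eta+C_F\widetilde C_\eta c\int_0^{\tilde t}(\tilde t-s)^{-\eta}\text{e}^{-\lambda(\tilde t-s)}\,\txtd s$, which is \emph{smaller} than the stated $\kappa$ by the factor $\text{e}^{cC_F/\lambda}$ in the second term; so the phrase ``with $\kappa$ exactly as stated'' is slightly off, though the theorem's dimension bound follows a fortiori from the sharper one.
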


We remark that  $\kappa$ is determined by the constant
 $\widetilde{C}_\eta$ in Lemma \ref{lem_estimates}, 
the Lipschitz constant $C_F$ of $F$ and the constants $c$ and $\lambda$ in 
\eqref{omega:constant}.

	\begin{proof}
	We verify the hypotheses in Theorem \ref{thm_rexpa}.
	\begin{itemize}
	\item[$(H_1)$]  
	By {\bf Assumptions 2} (AC), this property holds for the spaces $X$ and $Y=X_\eta$, for arbitrary $\eta\in(0,1).$ 
	\item[$(H_2)$] 
	This was shown in Lemma \ref{lem_abs}. In fact, $B(\omega)=B(0,\rho(\omega)+\delta)$, for some $\delta>0$, 
	is pullback $\mathcal{D}$-absorbing and $B\in\mathcal{D}.$ 
	Moreover, the absorbing time fulfills the condition in $(H_1)$. 
	In fact, let $D\in \mathcal{D}$, then 
	$$
	T_{D,\omega}=\inf\left\{\tilde t\geq 0 : \text{e}^{-(\lambda -cC_F)t}\Big(2c\sup_{\zeta\in D(\theta_{-t}\omega)} \|\zeta \|+\sigma\hat c\|\omega(-t)\|_{X^\beta}\Big)<\delta\quad  \forall t\geq \tilde t\right\},
	$$
	see the estimate in \eqref{est_abs_time}.
	\item[$(H_4)$] We verify the Lipschitz continuity of $\varphi$ in $B$. To this end let $u_0,v_0\in B(\omega)$, $\omega\in\Omega$. For the difference of the corresponding solutions we obtain
\begin{align*}
	&\|\varphi( t, \omega, u_0)-\varphi(t, \omega, v_0)\|_{X}\\
	\leq &\|\widetilde{U}( t,\omega)(u_0-v_0)\|_{X}
	+\int_0^{t}\|\widetilde{U}( t-s,\theta_{s}\omega)
	\big(F(\varphi(s, \omega, u_0))-F(\varphi(s, \omega, v_0))\big)\|_{X}~\txtd s\\
	\leq &c \text{e}^{-\lambda t}\|u_0-v_0\|_X+\int_0^{t}c\text{e}^{-\lambda (t-s)}
	\big\|F(\varphi(s, \omega, u_0))-F(\varphi(s, \omega, v_0))\big\|_{X}~\txtd s\\
	\leq &c \text{e}^{-\lambda t}\|u_0-v_0\|_X+cC_F\int_0^{t}
	 \text{e}^{-\lambda (t-s)}
	\big\|\varphi(s, \omega, u_0)-\varphi(s, \omega, v_0)\big\|_{X}~\txtd s\\
	\leq &c \|u_0-v_0\|_X+cC_F\int_0^{t}
	 \text{e}^{-\lambda (t-s)}
	\big\|\varphi(s, \omega, u_0)-\varphi(s, \omega, v_0)\big\|_{X}~\txtd s.
	\end{align*}
	Hence, Gronwall's lemma implies that 
	$$
	\|\varphi( t, \omega, u_0)-\varphi(t, \omega, v_0)\|_{X}\leq 
	c \|u_0-v_0\|_X\text{e}^{cC_F\int_0^t\text{e}^{-\lambda(t-s)}~\txtd s}\leq 
	c\text{e}^{\frac{cC_F}{\lambda}} \|u_0-v_0\|_X.
	$$	
	\item[$(H_3)$] Finally, we use the Lipschitz continuity in $(H_4)$ to verify the smoothing property for the spaces $X$ and $Y=X_\eta$.
	Let $\tilde t>0$. We estimate the 
	difference of the solutions in the $X_\eta$-norm, 
	\begin{align*}
	&\|\varphi(\tilde t, \omega, u_0)-\varphi(\tilde t, \omega, v_0)\|_{X_\eta}\\
	\leq &\|\widetilde{U}(\tilde t,\omega)(u_0-v_0)\|_{X_\eta}
	+\int_0^{\tilde t}\|\widetilde{U}(\tilde t-s,\theta_{s}\omega)
	\big(F(\varphi(s, \omega, u_0))-F(\varphi(s, \omega, v_0))\big)\|_{X_\eta}~\txtd s\\
	\leq &\frac{\widetilde{C}_\eta}{\tilde{t}^\eta} \text{e}^{-\lambda\tilde t}\|u_0-v_0\|_X+\widetilde{C}_\eta\int_0^{\tilde t}\frac{\text{e}^{-\lambda(\tilde t-s)}}{(\tilde t-s)^\eta}
	\big\|F(\varphi(s, \omega, u_0))-F(\varphi(s, \omega, v_0))\big\|_{X}~\txtd s\\
	\leq &\frac{\widetilde{C}_\eta}{\tilde{t}^\eta} \text{e}^{-\lambda\tilde t}\|u_0-v_0\|_X+C_F\widetilde{C}_\eta\int_0^{\tilde t}
	\frac{\text{e}^{-\lambda(\tilde t-s)}}{(\tilde t-s)^\eta}
	\big\|\varphi(s, \omega, u_0)-\varphi(s, \omega, v_0)\big\|_{X}~\txtd s\\
	\leq &\frac{\widetilde{C}_\eta}{\tilde{t}^\eta}\|u_0-v_0\|_X+C_F\widetilde{C}_\eta c\text{e}^{\frac{cC_F}{\lambda}}\int_0^{\tilde t}
	\frac{\text{e}^{-\lambda(\tilde t-s)}}{(\tilde t-s)^\eta}~\txtd s
	 \|u_0-v_0\|_X,
	\end{align*}
	where we used the Lipschitz continuity of $\varphi$ in $X$ in the last step.
	Hence, the smoothing property holds with the smoothing constant
	$$
	\kappa=\frac{\widetilde{C}_\eta}{\tilde{t}^\eta} +C_F\widetilde{C}_\eta c\text{e}^{\frac{cC_F}{\lambda}}\int_0^{\tilde t}
	\frac{\text{e}^{-\lambda s}}{s^\eta}~\txtd s.
	$$
	The smoothing property holds for any $\tilde t>0$ and consequently, $(H_3)$ is satisfied.
	\end{itemize}
	\end{proof}
	
	An immediate consequence is the existence and 
finite fractal dimension of the global random attractor.

\begin{corollary}
There exists a unique global random attractor for $\varphi$ and its fractal dimension is
bounded by 
$$
	\textnormal{dim}_f(\mathcal{A}(\omega))\leq \inf_{\nu\in(0,\frac{1}{2})}\left\{\log_{\frac{1}{2\nu}}\left(N_{\frac{\nu}{\kappa}}^X(B^{X_\eta}(0,1))\right)\right\},
	$$
	for all $\eta\in(0,1)$, where $\kappa$ is the constant given in Theorem \ref{thm_expattr}.
\end{corollary}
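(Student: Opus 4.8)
The plan is to obtain both the existence of the global random attractor and the dimension bound directly from Theorem~\ref{thm_expattr} and Theorem~\ref{thm_exist_attractor}. Fix $\eta\in(0,1)$ and $\nu\in(0,\frac12)$ and let $\mathcal{M}^{\nu,\eta}$ be the nonautonomous random pullback exponential attractor provided by Theorem~\ref{thm_expattr}. Evaluating property \textbf{c)} in Definition~\ref{expattractor} at $t=0$, the set $\mathcal{M}^{\nu,\eta}(0,\omega)$ is a compact random set that pullback $\mathcal{D}$-attracts every $D\in\mathcal{D}$. Hence Theorem~\ref{thm_exist_attractor} applies and yields a unique $\mathcal{D}$-random (pullback) attractor $\mathcal{A}=\{\mathcal{A}(\omega)\}_{\omega\in\Omega}$ for $\varphi$; this settles the existence and uniqueness part.

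For the dimension bound I would first establish the inclusion $\mathcal{A}(\omega)\subseteq\mathcal{M}^{\nu,\eta}(0,\omega)$ for every $\omega\in\Omega$. This is the usual minimality of the global attractor among closed attracting random sets: since $\mathcal{A}\in\mathcal{D}$ is $\varphi$-invariant, one has $\varphi(s,\theta_{-s}\omega,\mathcal{A}(\theta_{-s}\omega))=\mathcal{A}(\omega)$ for all $s\geq 0$, so the pullback attraction of $\mathcal{A}$ by $\mathcal{M}^{\nu,\eta}(0,\cdot)$ forces $d(\mathcal{A}(\omega),\mathcal{M}^{\nu,\eta}(0,\omega))=0$, and the closedness of $\mathcal{M}^{\nu,\eta}(0,\omega)$ gives the inclusion. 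Alternatively, one may invoke Theorem~\ref{attractorbounded} with the compact absorbing set obtained from $B$ via the smoothing property $(H_3)$, together with the closedness of $\mathcal{M}^{\nu,\eta}(0,\omega)$.

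From the inclusion and the monotonicity of the fractal dimension under set inclusion for precompact sets (the requirement that covering balls be centred in the set only costs the harmless factor $2$ in the covering argument) I obtain, for every $\omega\in\Omega$,
$$
\textnormal{dim}_f(\mathcal{A}(\omega))\leq\textnormal{dim}_f(\mathcal{M}^{\nu,\eta}(0,\omega))\leq\sup_{t\in\R}\textnormal{dim}_f(\mathcal{M}^{\nu,\eta}(t,\omega))\leq\log_{\frac{1}{2\nu}}\!\left(N_{\frac{\nu}{\kappa}}^X(B^{X_\eta}(0,1))\right),
$$
where the last inequality is the estimate in Theorem~\ref{thm_expattr} and the right-hand side is a deterministic quantity. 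Since $\mathcal{A}(\omega)$ does not depend on $\nu$ while $\kappa$ does not depend on $\nu$ either, this bound holds simultaneously for all $\nu\in(0,\frac12)$, so we may pass to the infimum over $\nu$; as $\eta\in(0,1)$ was arbitrary, this is exactly the asserted estimate.

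The only point that is not entirely automatic is the containment $\mathcal{A}(\omega)\subseteq\mathcal{M}^{\nu,\eta}(0,\omega)$, i.e.\ making rigorous that the global random attractor sits inside the exponential attractor; the existence and uniqueness of $\mathcal{A}$, the monotonicity of $\textnormal{dim}_f$, and the passage to the infimum over $\nu$ are all immediate consequences of the results already established.
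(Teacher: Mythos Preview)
Your argument is correct and matches the paper's approach: the paper does not give a separate proof but states the corollary as an ``immediate consequence'' of Theorem~\ref{thm_expattr}, having already remarked after Definition~\ref{expattractor} that the existence of a nonautonomous random exponential attractor yields, via Theorem~\ref{thm_exist_attractor}, the global random attractor and that the latter is contained in the former. You have simply spelled out this containment (through invariance, attraction and closedness) and the passage to the infimum over $\nu$ in detail.
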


The existence of a nonautonomous random exponential attractor implies that the global random attractor exists and that its fractal dimension is finite. 
We point out that in our particular case  
it is, in fact, easier to consider random exponential attractors than to 
deduce the existence of the global random attractor from Theorem \ref{attractorbounded}.
In fact, we have shown that a tempered absorbing set exists, but the theorem 
requires the existence of a compact absorbing set.
To show how this can be established and to indicate that the proof is indeed more involved 
than verifying the hypotheses of Theorem \ref{thm_rexpa} we provide the following lemma, even 
if we do not use it to prove our main results.

\begin{lemma}\label{compact}
Let $T_B\geq 0$ denote the absorbing time corresponding to the absorbing set $B$. Then, 
	the set $K(\omega):=\overline{\varphi(T_B,\theta_{-T_{B}}\omega, B(\theta_{-T_B}\omega)) }^{X}$ is a compact absorbing set for $\varphi$.
\end{lemma}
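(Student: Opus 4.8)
The plan is to verify the two defining properties of a compact absorbing set: compactness of $K(\omega)$ for every $\omega\in\Omega$, and the absorption property. Compactness is the more substantial point, so I would address it first. Fix $\omega\in\Omega$. The set $B(\theta_{-T_B}\omega)$ is a bounded subset of $X$, so it suffices to show that $\varphi(T_B,\theta_{-T_B}\omega,\cdot)$ maps bounded sets of $X$ into relatively compact sets, i.e.~that $\varphi(T_B,\theta_{-T_B}\omega, B(\theta_{-T_B}\omega))$ is bounded in $X_\eta$ for some $\eta\in(0,1)$; the compact embedding $X_\eta\hookrightarrow X$ from \textbf{Assumptions 2} (AC) then gives relative compactness in $X$, and taking the closure yields a compact set. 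To get the $X_\eta$-bound I would estimate each term of the representation formula \eqref{cocycle:p} (with $t=T_B$, fibre $\theta_{-T_B}\omega$) in the $X_\eta$-norm, exactly as in the verification of the smoothing property $(H_3)$ in the proof of Theorem \ref{thm_expattr}: for the linear term $\|\widetilde U(T_B,\theta_{-T_B}\omega)u_0\|_{X_\eta}\leq \widetilde C_\eta T_B^{-\eta}\text{e}^{-\lambda T_B}\|u_0\|_X$ by Lemma \ref{lem_estimates}; for the $F$-term one uses the linear growth \eqref{constants:F} together with the bound $\|u(s)\|_X\leq\rho(\theta_{-T_B}\omega)+\delta$ valid on the absorbing set, and the integrable singularity $(T_B-s)^{-\eta}$; for the noise terms one proceeds as in the proof of Lemma \ref{lem_abs}, writing $A(\theta_s\omega)\omega(s)=(-A(\theta_s\omega))^{1-\beta}(-A(\theta_s\omega))^\beta\omega(s)$ and using the third estimate of Lemma \ref{lem_estimates} with exponents adjusted so that the resulting singularity $(T_B-s)^{-(1-\beta+\eta)}$ remains integrable, which is possible by choosing $\eta<\beta$. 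All these bounds are finite for fixed $\omega$ by the subexponential growth of $\|\omega(\cdot)\|_{X_\beta}$, hence $\varphi(T_B,\theta_{-T_B}\omega,B(\theta_{-T_B}\omega))$ is bounded in $X_\eta$ and $K(\omega)$ is compact in $X$.

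Next I would check that $K$ is a random set: $\omega\mapsto K(\omega)$ has closed values by construction, and measurability of $\omega\mapsto\inf_{y\in K(\omega)}\|x-y\|_X$ follows from the measurability of $\varphi$ and of $\omega\mapsto B(\theta_{-T_B}\omega)$ (the latter being a random set by Lemma \ref{lem_abs} and the measurability of the shift). One should also note $K\in\mathcal{D}$: temperedness of $K$ follows from temperedness of $B$ together with the uniform exponential decay in \eqref{omega:constant} and the Lipschitz bound from $(H_4)$, arguing as in \cite[Lem.~3.7]{BessaihGarridoSchmalfuss}.

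It remains to show that $K$ absorbs every $D\in\mathcal{D}$. Since $B$ is pullback $\mathcal{D}$-absorbing (Lemma \ref{lem_abs}), there is $T_D(\omega)\geq 0$ with $\varphi(t,\theta_{-t}\omega,D(\theta_{-t}\omega))\subseteq B(\omega)$ for all $t\geq T_D(\omega)$. For $t\geq T_D(\theta_{-T_B}\omega)+T_B$ I would write, using the cocycle property and the monotonicity $T_{D,\theta_{-t}\omega}\leq T_{D,\omega}$ noted in $(H_2)$,
\begin{align*}
\varphi(t,\theta_{-t}\omega,D(\theta_{-t}\omega))
&=\varphi\big(T_B,\theta_{-T_B}\omega,\varphi(t-T_B,\theta_{-t}\omega,D(\theta_{-t}\omega))\big)\\
&\subseteq\varphi\big(T_B,\theta_{-T_B}\omega,B(\theta_{-T_B}\omega)\big)\subseteq K(\omega),
\end{align*}
where the first inclusion uses that $t-T_B\geq T_D(\theta_{-T_B}\omega)\geq T_{D,\theta_{-t}\omega}$ (applied on the fibre $\theta_{-(t-T_B)}(\theta_{-T_B}\omega)=\theta_{-t}\omega$) so that $\varphi(t-T_B,\theta_{-t}\omega,D(\theta_{-t}\omega))\subseteq B(\theta_{-T_B}\omega)$. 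Hence $K$ is a compact absorbing set, which proves the lemma.

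The main obstacle is the compactness step: one must ensure the exponents in the three smoothing estimates of Lemma \ref{lem_estimates} can be chosen so that every integrand in the $X_\eta$-norm estimate of \eqref{cocycle:p} has an integrable time singularity — in particular the generalized stochastic convolution term forces $\eta<\beta$ — and one must keep track that all resulting integrals against $\|\omega(\cdot)\|_{X_\beta}$ converge thanks to subexponential growth; everything else is a routine bookkeeping of constants that already appeared in Lemmas \ref{lem_abs} and Theorem \ref{thm_expattr}. \qed
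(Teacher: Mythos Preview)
Your approach is essentially the paper's: choose $\eta\in(0,\beta)$, estimate each term of the pathwise mild representation in the $X_\eta$-norm using Lemma~\ref{lem_estimates} (with the generalized convolution producing the singularity $(T_B-s)^{-(1-(\beta-\eta))}$), and conclude by the compact embedding $X_\eta\hookrightarrow X$; the paper in fact only carries out this compactness step and omits the absorption, measurability and temperedness of $K$ that you add. One small inaccuracy worth correcting: the bound $\|u(s)\|_X\leq\rho(\theta_{-T_B}\omega)+\delta$ for $s\in(0,T_B]$ does not follow from the absorbing property---the paper instead uses the $s$-dependent bound $\|\varphi(s,\theta_{-T_B}\omega,u_0)\|_X\leq\rho(\theta_{s-T_B}\omega)+\delta$ (equivalently, an a~priori bound via \eqref{est_abs_time} would also work).
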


\begin{proof}
The proof is based on compact embeddings of fractional power spaces. Let $\eta>0$ be such that $0<\eta<\beta\leq 1$. It suffices to derive uniform estimates of the solutions w.r.t. the $X_{\eta}$-norm since $X_{\eta}$ is compactly embedded into $X$. Let $u_0\in B(\theta_{-T_B}\omega)$. We observe that
	\begin{align*}
	\|\varphi(T_B,\theta_{-T_B}\omega,u_{0})\|_{X_{\eta}}  \leq &\| \widetilde{U}(T_B,\theta_{-T_B}\omega)u_{0}\|_{X_{\eta}} + \|\widetilde{U}(T_B,\theta_{-T_B}\omega)\theta_{-T_B}\omega(T_B)\|_{X_{\eta}} \\
&	+ \int\limits_{0}^{T_B} \|\widetilde{U}(T_B-s,\theta_{s-T_B}\omega) F(\varphi(s,\theta_{-T_B}\omega,u_{0})) \|_{X_{\eta}}~\txtd s \\
	& +\sigma\int\limits_{0}^{T_B} \|\widetilde{U}(T_B-s,\theta_{s-T_B}\omega) A(\theta_{s-T_B}\omega) \omega(s-T_B) \|_{X_{\eta}}~\txtd s.
	\end{align*}
	To estimate these terms we use that $u_{0}\in B(\theta_{-T_B}\omega)$ and that $B$ is a pullback absorbing set.
	The first and second term yield the following expressions,
	\begin{align*}
	 \|\widetilde{U}(T_B,\theta_{-T_B}\omega)u_{0}\|_{X_{\eta}} \leq \widetilde{C}_\eta \text{e}^{-\lambda T_B} \|u_{0}\|_X\leq \widetilde{C}_\eta e ^{-\lambda T_B} (\rho(\theta_{-T_B}\omega)+\delta),
	\end{align*}
	and
	\begin{align*}
	  \|\widetilde{U}(T_B,\theta_{-T_B}\omega)\omega(-T_B)\|_{X_{\eta}}\leq \widetilde{C}_\eta \frac{\hat c}{c}\text{e}^{-\lambda T_B}\|\omega(-T_B)\|_{X_{\beta}}.
	\end{align*}
	For the generalized convolution we obtain
	\begin{align*}
	&\int\limits_{0}^{T_B} \|\widetilde{U}(T_B-s,\theta_{s-T_B}\omega) A(\theta_{s-T_B}\omega) \omega(s-T_B) \|_{X_{\eta}}~\txtd s \\
	=&\int\limits_{0}^{T_B} \|A^{\eta}(\theta_{T_B}\omega)\widetilde{U}(T_B-s,\theta_{s-T_B}\omega)A^{1-\beta}(\theta_{s-T_B}\omega)A^{\beta}(\theta_{s-T_B}\omega)\omega(s-T_B) \|_{X}~~\txtd s\\
	\leq &\widetilde{C}_{1-(\beta-\eta)}\int \limits_{0}^{T_B} \frac{\text{e}^{-\lambda(T_B-s)}}{(T_B-s)^{1-(\beta-\eta)}} \|\omega(s-T_B)\|_{X_{\beta}} ~\txtd s\\
		\leq&\widetilde{C}_{1-(\beta-\eta)} \sup\limits_{s\in[0,T_B]} \|\omega(s-T_B)\|_{X_{\beta}}\int\limits_{0}^{T_B} \frac{\text{e}^{-\lambda(T_B-s)}}{(T_B-s)^{1-(\beta-\eta)}}~\txtd s<\infty .
	\end{align*}
	Finally, we estimate the drift term,
	\begin{align*}
	&\int\limits_{0}^{T_B} \|\widetilde{U}(T_B-s,\theta_{T_B-s}\omega) F(\varphi(s,\theta_{-T_B}\omega,u_{0})) \|_{X_{\eta}}~\txtd s 
	\\ \leq 
	&\widetilde{C}_\eta\overline{C}_{F}\int\limits_{-T_B}^{0}\frac{\text{e}^{\lambda r}}{(-r)^{\eta}}~\txtd r 
	+ \widetilde{C}_\eta C_{F} \int\limits_{-T_B}^{0} \frac{\text{e}^{\lambda r}}{(-r)^{\eta}} \|\varphi(r+T_B,\theta_{-T_B}\omega,u_{0})\|_X~\txtd r\\
	\leq 
	&\widetilde{C}_\eta\overline{C}_{F}\int\limits_{-T_B}^{0} \frac{\text{e}^{\lambda r}}{(-r)^{\eta}}~\txtd r
	+ \widetilde{C}_\eta C_{F} \int\limits_{-T_B}^{0} \frac{\text{e}^{\lambda r}}{(-r)^{\eta}}(\rho(\theta_r\omega)+\delta)~\txtd r,
	\end{align*}
	where we used that $u_0\in B(\theta_{-T_B}\omega)$ and the absorbing property of $B$, i.e., 
	$$
	\varphi(r+T_B,\theta_{-T_B}\omega,u_{0})\subset B(\theta_r\omega).
	$$ 
	We remark that the expressions and $\omega$-dependent constants in all estimates are well-defined. 
 	Collecting the estimates, we finally conclude that
	\begin{align*}
	\|\varphi(T_B,\theta_{-T_B}\omega, u_{0})\|_{X_{\eta}} \leq \widetilde{C}(\omega, \eta,\beta,\delta, T_B)<\infty,
	\end{align*}
	for some constant $\widetilde{C}(\omega, \eta,\beta,\delta, T_B).$
	The compact embedding $X_{\eta}\hookrightarrow X$ implies that the set 
	$\varphi(T_B,\theta_{-T_B}\omega, B(\theta_{-T_{B}}\omega))$ is relatively compact in $X$. Hence, $K(\omega)$ is compact which proves the statement.
	\qed
	\end{proof}\\

\section{Examples}\label{examples}

We provide examples of differential operators $A$ that satisfy our assumptions 
in the previous sections. The canonical examples are  uniformly elliptic operators with random, time-dependent coefficients. Such operators have been investigated in the context of SPDEs, see \cite{PronkVeraar, PortalVeraar} and the references therein. In the framework of random dynamical systems several properties of the evolution system generated by such operators have been analysed, including results on the spectral theory and principal Lyapunov exponents~\cite{book,MShen, ShenVickers}, stable and unstable manifolds and multiplicative ergodic theorems~\cite{CDLS,lns}.\\
 
We consider the random partial differential operators
in the Banach space $X:=L^{p}(G)$ for $2\leq p<\infty$, where $G\subset\mathbb{R}^{n}$ is a bounded open domain with smooth boundary $\partial G$ (see also~\cite[Sec.~7.6]{pazy}).  We recall that in our case the differential operator $A(t,\omega)$
depends on time $t\in\mathbb{R}$ and  the random parameter $\omega\in\Omega$ in the 
following way
$$ A(t,\omega)=A(\theta_t\omega), \qquad t\in\R,\omega\in\Omega.$$

\begin{example}\label{ex1}
	Let $m\in\N$ and $A$ be the random partial differential operator 
	$$
	A(\theta_t\omega,x,\txtD)=\sum\limits_{|k|\leq 2m}a_{k}(\theta_t\omega,x)\txtD^{k},\qquad t\in\R,~ \omega\in\Omega,~ x\in G,
	$$
	with homogeneous Dirichlet boundary conditions, 
	$$
	\txtD^{k}u=0\quad\text{on }\partial G \qquad\text{ for } |k|<m.
	$$ 
	We make the following assumptions on the coefficients. 
	\begin{enumerate}
		\item The operator $A$ is uniformly strongly elliptic in $G$, i.e.~there exists a constant $\overline{c}>0$ such that 
		$$ 
		(-1)^m	\sum\limits_{|k|=2m}a_{k}(\theta_t\omega,x)\xi_{k}\geq \overline{c}|\xi|^{2m}\quad \mbox{ for all } t\in\mathbb{R},~ x\in\overline{G},~ \xi\in\mathbb{R}^{n}.$$
		\item The coefficients form a stochastic process $(t,\omega)\mapsto a_{k}(\theta_{t}\omega,\cdot{})\in C^{2m}(\overline{G})$ which has H\"older continuous trajectories, i.e. there exists $\nu\in(0,1]$ such that 
		\begin{align}\label{hcoeff}
		|a_{k}(\theta_{t}\omega,x)-a_{k}(\theta_{s}\omega,x)|\leq \overline{c}_{1}|t-s|^{\nu}\quad\mbox{ for all } t, s\in\mathbb{R},~ x\in\overline{G},~  |k|\leq 2m,
		\end{align}
		for some $\overline{c}_{1}>0$.
		\item The constants $\overline{c}$ and $\overline{c}_1$ are uniformly bounded with respect to $\omega\in\Omega$ and $x\in G$. 
	\end{enumerate}
	We define the $L^{p}$-realization of $A(\cdot{},\cdot, D)$ by
	\begin{align}\label{defop}
	&A_{p}(\theta_t\omega)u:=A(\theta_t\omega,x,\txtD)u\qquad  \mbox{ for }
	u\in \mathcal{D}_A, \mbox{ where }\\
	&\mathcal{D}_A:=D(A_{p}(\theta_t\omega))=W^{2m,p}(G)\cap W^{m,p}_{0}(G).\nonumber 
	\end{align} 
	
	We verify now Assumptions (A0)--(A3) and (AC). It is well-known that $A_{p}(\omega)$ generates a compact analytic semigroup in $L^p(G)$ for every $\omega\in\Omega$. Moreover, the mapping $\omega\mapsto A_p(\omega)v$ is measurable for every smooth function $v\in C^{\infty}(G)$. This entails the measurability of the mapping $\omega\mapsto A_p(\omega)v$ for every $v\in L^{p}(G)$. Consequently, the assumptions (A0)-(A2) and (AC) are satisfied.
	We only need to show the H\"older continuity of the mapping $t\mapsto A_{p}(\theta_{t}\omega)$ to verify (A3).\\
	
	To this end,  let $v\in \cD_A$ and $t,s\in\R$. Then, we have
	\begin{align*}
	\|A_{p}(\theta_{t}\omega)-A_{p}(\theta_{s}\omega)\|^{p}_{\mathcal{L}(\cD_{A},X)}& =\sup\limits_{v\in\mathcal{D}_A,\|v\|=1}\|(A_{p}(\theta_{t}\omega)-A_{p}(\theta_{s}\omega))v\|_{X}^{p}\\
	& = \sup\limits_{v\in\mathcal{D}_A,\|v\|=1} \Big\|\sum\limits_{|k|\leq 2m}(a_{k}(\theta_{t}\omega,x)-a_{k}(\theta_{s}\omega,x))\txtD^{k}v\Big\|_{X}^{p}.
	\end{align*}	
	Furthermore, we estimate
	\begin{align*}
	&\Big\|\sum\limits_{|k|\leq 2m}(a_{k}(\theta_{t}\omega,x)-a_{k}(\theta_{s}\omega,x))\txtD^{k}v\Big\|_{X}^{p}\\
	=&\  \int\limits_{G}\Bigg|\sum\limits_{|k|\leq 2m} (a_{k}(\theta_{t}\omega,x) -a_{k}(\theta_{s}\omega,x))D^{k}v(x)\Bigg|^{p}~\txtd x\\
	\leq &\  \int\limits_{G} C_{p} \sum\limits_{|k|\leq 2m} |(a_{k}(\theta_{t}\omega,x)-a_{k}(\theta_{s}\omega,x))D^{k} v(x) |^{p}~\txtd x\\
	\leq &\ C_{p}\sum\limits_{|k|\leq 2m}\sup\limits_{x\in G} |a_{k}(\theta_{t}\omega,x)-a_{k}(\theta_{s}\omega,x)|^{p} \int\limits_{G}|D^{k}v(x)|^{p}~\txtd x
	\\
	\leq &\ C_{p}\sum\limits_{|k|\leq 2m}\sup\limits_{x\in G} |a_{k}(\theta_{t}\omega,x)-a_{k}(\theta_{s}\omega,x)|^{p} \|v\|^{p}_{W^{2m,p}}\\
	\leq &\ C_{p} ||v||^{p}_{W^{2m,p}}\sum\limits_{|k|\leq 2m}\|a_{k}(\theta_{t}\omega, \cdot)-a_{k}(\theta_{s}\omega, \cdot)\|^{p}_{C^{2m}(\overline{G})}.
	\end{align*}
	Therefore, the H\"older continuity of $(t,\omega)\mapsto a_{k}(\theta_{t}\omega,\cdot{})$
	justifies~\eqref{kt}.
	
\end{example}

The following example is similar to~\cite[Example~6.2]{PronkVeraar} and~\cite[Section 10.2]{Yagi}. In our case, the operators satisfy the structural assumption \eqref{struc_ass} and
the  domains are assumed to be constant with respect to time $t$ and $\omega\in\Omega$. 
For random nonautonomous second order operators of this type see~\cite[Sec.~3]{MShen} and~\cite{book}.

\begin{example} 
	Let $m\in\N$ and $A$ be the differential operator 
	$$ 
	A(\theta_t\omega,x,\txtD):=\sum\limits_{|k_1|,|k_2|\leq m}\txtD^{k_1}(a_{k_1,k_2}(\theta_t\omega,x)\txtD^{k_2}),\qquad \omega\in\Omega,~ x\in G,~t\in\mathbb{R},
	$$ 
	with homogeneous Dirichlet or Neumann boundary conditions. Similarly as in the previous example and~\cite{PronkVeraar} we make the following assumptions on the coefficients.
	\begin{enumerate}
		\item We assume that the coefficients $a_{k_1,k_2}$ are bounded and symmetric. More precisely, there exists a constant $K\geq 1$ such that 
		$$
		|a_{k_1,k_2}(\theta_t\omega,x)|\leq K\qquad \text{for all}\ |k_1|,|k_2|\leq m,\ t\in\mathbb{R}, x\in G, \omega\in\Omega
		$$ 
		and $$a_{k_1,k_2}(\cdot,\cdot)=a_{k_2,k_1}(\cdot,\cdot)~~\mbox{ for }~|k_1|,|k_2|\leq m.$$ Furthermore, the mapping $t\mapsto \txtD^{k} a_{k_1,k_2}(\theta_t\omega,x)$ is continuous for $|k|,|k_1|,|k_2|\leq m$, $\omega\in\Omega$ and $x\in G$.
			\item The operator $A$ is uniformly elliptic in $G$, i.e. there exists a constant $\bar c>0$ such that 
		$$ 
		\sum\limits_{|k_1|=|k_2|=m}a_{k_1,k_2}(\theta_t\omega,x)\xi_{k_1}\xi_{k_2}\geq \overline{c}|\xi|^{2m}\qquad \mbox{ for all } t\in\mathbb{R},~x\in\overline{G},~ \xi\in\mathbb{R}^{n}.
		$$
		\item The coefficients form a stochastic process $(t,\omega)\mapsto a_{k_1,k_2}(\theta_{t}\omega,\cdot{})\in C^{m}(\overline{G})$ with H\"older continuous trajectories as in~\eqref{hcoeff}. This means that there exists  $\nu\in(0,1]$ such that 
					\begin{align*}
		|a_{k_1,k_2}(\theta_{t}\omega,x)-a_{k_1,k_2}(\theta_{s}\omega,x)|\leq \overline{c}_{2}|t-s|^{\nu}\quad\mbox{ for all } t\in\mathbb{R},~ x\in\overline{G},~  |k_1|, |k_2|\leq m,
		\end{align*}
		for some constant $\overline{c}_{2}>0$.
			\item All constants $K$, $\overline{c}$ and $\overline{c}_2$ are uniformly bounded with respect to $\omega\in\Omega$ and $x\in G$. 
	\end{enumerate} 
	 One can define the $L^{p}$-realization $A_p$ of $A(\cdot,\cdot,\txtD)$ as in \eqref{defop}. Moreover, one can verify as in Example~\ref{ex1} that the assumptions (A0)--(A3) are satisfied. 
	For instance, (A3) follows from the estimate
	\begin{align*}
	&\|A_{p}(\theta_{t}\omega)-A_{p}(\theta_{s}\omega)\|^{p}_{\mathcal{L}(\cD_{A},X)} =\sup\limits_{v\in\mathcal{D}_A,\|v\|=1}\|(A_{p}(\theta_{t}\omega)-A_{p}(\theta_{s}\omega))v\|_{X}^{p}\\
	&=\Bigg\|\sum\limits_{|k_1|,|k_{2}|\leq m} \txtD^{k_1}(a_{k_1,k_2}(\theta_{t}\omega,x)-a_{k_1,k_2}(\theta_{s}\omega,x))\txtD^{k_2}v\Bigg\|^{p}_{X}\\
	&=\int\limits_{G}\left|\sum\limits_{|k_1|,|k_2|\leq m} \txtD^{k_1} (a_{k_1,k_2}(\theta_{t}\omega,x)-a_{k_1,k_2}(\theta_{s}\omega,x))\txtD^{k_2}v(x) \right|^{p}~\txtd x\\
	& \leq C_{p}\sum\limits_{|k_1|,|k_2|\leq m} \int\limits_{G} |\txtD^{k_1}(a_{k_1,k_2}(\theta_{t}\omega,x)-a_{k_1,k_2}(\theta_{s}\omega,x))\txtD^{k_2}v(x)|^{p}~\txtd x\\
	&\leq C_{p} \sum\limits_{|k_1|,|k_2|\leq m} \sup\limits_{x\in G} |\txtD^{k_1}(a_{k_1,k_2}(\theta_{t}\omega,x)-a_{k_1,k_2}(\theta_{s}\omega,x))|^{p}\int\limits_{G}|\txtD^{k_2}v(x)|^{p}~\txtd x \\
	&\leq C_{p} \|v\|^{p}_{W^{2m,p}}\sum\limits_{|k_1|,|k_2|\leq m} \|a_{k_1,k_2}(\theta_{t}\omega,\cdot)-a_{k_1,k_2}(\theta_{s}\omega,\cdot)\|^{p}_{C^{m}(\overline{G})}.
	\end{align*}
\end{example}

\begin{example}
	Another widely studied  example are operators of the form $A:=\Delta + a(\theta_t\omega)$, where $\Delta$ denotes the Laplace operator with homogeneous Dirichlet boundary conditions 
	in a bounded domain $G\subset\R^n$ and $a(\theta_t\omega)$ can be viewed as a time-dependent random potential~\cite{SalakoShen}. Here, the function $a:\Omega\to (0,\infty)$ is measurable and the mapping $(t,\omega)\mapsto a(\theta_t\omega)$ is assumed to be H\"older continuous. For instance, in mathematical models for populations dynamics such random potentials can be used to quantify environmental fluctuations, see e.g.~\cite{tomasso} and the references specified therein. Several PDEs where the linear part has this structure have been investigated, see e.g.~\cite{SalakoShen} where a random nonautonomous version of the Fisher-KPP equation is considered. The asymptotic dynamics of the solutions as $t$ tends to infinity
has been characterized depending on the behavior of $a$. 
\end{example}

\subsection*{Acknowledgements}
We would like to thank the anonymous referees for their valuable comments and remarks. 
CK and AN have been supported by a German Science Foundation (DFG) grant in the D-A-CH framework, grant KU 3333/2-1~(AN until 31. December 2019). CK acknowledges partial support by a Lichtenberg Professorship funded by the VolkswagenStiftung and by the EU within the TiPES project funded the European Unions Horizon 2020 research and innovation programme under Grant Agreement No. 820970.


\end{document}